\numberwithin{equation}{section}
\newtheorem{thm}{Theorem}[section]
\newtheorem{cor}[thm]{Corollary}
\newtheorem{lem}[thm]{Lemma}
\theoremstyle{definition}
\newtheorem{defn}[thm]{Definition}
\newtheorem{rem}[thm]{Remark}
\begin{document}
\title{\textbf{Cliques in the union of $C_4$-free graphs}}
\author{Abeer Othman and Eli Berger}
\address{Department of Mathematics, University of Haifa, Mount Carmel, Haifa 31905, Israel}
\email{abeer.othman@gmail.com}
\address{Department of Mathematics, University of Haifa, Mount Carmel, Haifa 31905, Israel}
\email{berger@math.haifa.ac.il}
\keywords{$C_4$-free graphs, Cliques, Obedient sets}
\begin{abstract}
Let $B$ and $R$ be two simple graphs with vertex set $V$, and let $G(B,R)$ be the simple graph with vertex set $V$, in which two vertices are adjacent if they are adjacent in at least one of $B$ and $R$. We prove that if $B$ and $R$ are two $C_4$-free graphs on the same vertex set $V$ and $G(B,R)$ is the complete graph, then there exists an $B$-clique $X$, an $R$-clique $Y$ and a clique $Z$ in $B$ and $R$, such that $V=X\cup Y\cup Z$. Further, if $x\in Z$ then $x$ is one of the vertices of some double $C_5$ in $G(B,R)$. In particular, if also $G(B,R)$ does not contains a double $C_5$, then $V$ is obedient. We obtain that if $B$ and $R$ are $C_4$-free graphs then $\omega(G(B,R))\leq \omega(B)+\omega(R)+\frac{1}{2}\min(\omega(B),\omega(R))$ and $\omega(G(B,R))\leq \omega(B)+\omega(R)+\omega(H(B,R))$ where $H(B,R)$ is the simple graph with vertex set $V$, in which two vertices are adjacent if they are adjacent in $B$ and $R$.
\end{abstract}
\maketitle

\section{Introduction}
All graphs in this article are simple in the sense that they do not have double edges or loops. Let $B$ (for ``Blue'') and $R$ (for ``Red'') be two graphs on the same vertex set $V$. Denote by $G(B,R)$ (respectively, $H(B,R)$) the graph with vertex set $V$, in which two vertices are adjacent if they are adjacent in $B$ or (respectively, and) in $R$. Recall that a \textbf{clique} in a graph is a set of pairwise adjacent vertices. If $G(B,R)$ is the complete graph, then it is naturally to ask: Does $V$ is the union of a clique in $B$ and a clique in $R$?. Easier examples show that the answer is ``No''. For example, if $B=C_5$ and $R$ is its complement, then $\omega(B)+\omega(R)=4<|V|=5$, where $\omega(G)$ is the maximal size of a clique in $G$. In particular, $V$ is not the union of a clique in $B$ and a clique in $R$. Some results showed that under some additional assumptions on the graphs $B$ and $R$, we obtain that $V$ is the union of a clique in $B$ and a clique in $R$. We begin to summarize them. First, we need the following definitions.

\begin{defn}
We say that a subset $U\subseteq V$ is \textbf{obedient} if there exist an $R$-clique $X$ and an $B$-clique $Y$ such that $U=X\cup Y$.
\end{defn}

\begin{defn}
A graph $G$ is called \textbf{chordal} if each of its cycles of four or more nodes has a \textbf{chord}, which is an edge joining two nodes that are not adjacent in the cycle. In other words, $G$ is $C_k$\textbf{-free} for $k\geq 4$, i.e., $G$ has no induced cycles of length at least four.
\end{defn}

In \cite{gy}, Gy{\'a}rf{\'a}s and Lehel proved the following theorem.

\begin{thm}[Theorem 3 of \cite{gy}]\label{8}
Let $B$ and $R$ be two graphs on the same vertex set $V$ and assume that $G(B,R)$ is the complete graph. If $B$ and $R$ are $C_k$-free for $k=4$ and $k=5$, then $V$ is obedient.
\end{thm}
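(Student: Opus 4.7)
The plan is to fix a maximum $R$-clique $X\subseteq V$ and prove that $V\setminus X$ is a $B$-clique; the pair $(X, V\setminus X)$ will then witness that $V$ is obedient.

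Suppose toward a contradiction that there exist $u,v\in V\setminus X$ with $uv\notin B$. Since $G(B,R)$ is the complete graph, $uv\in R$. Let $N_u=N_R(u)\cap X$ and $N_v=N_R(v)\cap X$; by the maximality of $X$, neither $N_u$ nor $N_v$ equals $X$. Applying $C_4$-freeness of $R$ to the would-be induced $4$-cycle on $\{a,u,v,a'\}$ with $a\in N_u\setminus N_v$ and $a'\in N_v\setminus N_u$ (the side $aa'$ is an $R$-edge because $a,a'\in X$, while the diagonals $av$ and $a'u$ are not $R$-edges by the choice of $a,a'$), I conclude that one of $N_u\setminus N_v$ and $N_v\setminus N_u$ is empty, so without loss of generality $N_u\subseteq N_v$. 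Pick any $b\in X\setminus N_v$: then $bu,bv\in B\setminus R$. For any other $b'\in X\setminus N_v$, $C_4$-freeness of $B$ applied to the $4$-cycle on $\{u,b,v,b'\}$ (with the chord $uv\notin B$) forces $bb'\in B$. Hence $X\setminus N_v$ is a $B$-clique, each of whose vertices is $B$-adjacent to both $u$ and $v$.

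I next consider the candidate cover $X'=N_v\cup\{v\}$ (an $R$-clique by construction) and $Y'=(X\setminus N_v)\cup\{u\}$ (a $B$-clique by the previous step). Their union equals $X\cup\{u,v\}$. If $V=X\cup\{u,v\}$ the pair $(X',Y')$ already witnesses obedience. Otherwise, for each extra vertex $w\in V\setminus(X\cup\{u,v\})$ I analyze the $B/R$-colouring of the edges $wu$, $wv$, and the edges from $w$ to the distinguished vertices in $N_u$, $N_v\setminus N_u$, and $X\setminus N_v$. In the critical cases where $w$ fails to attach to either $X'$ or $Y'$, an induced $C_5$ must appear in $R$ or in $B$ (for instance a $5$-cycle on $\{u,b,v,c,w\}$ when a vertex $c\in N_v\setminus N_u$ and a suitable $w$ exist), contradicting the $C_5$-free hypothesis; in the remaining cases $w$ attaches to the appropriate side of the candidate cover.

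The main obstacle is precisely this final case analysis. There are sub-cases according to whether $N_u=N_v$ or $N_u\subsetneq N_v$, and for each extra vertex $w$ one must enumerate the colourings of the edges from $w$ to $u$, $v$, and the distinguished vertices of $X$, in each case either placing $w$ in $X'$ or $Y'$, or exhibiting a forbidden induced $C_4$ or $C_5$. The interplay of the two forbidden cycles is delicate; an alternative, possibly cleaner route is an induction on $|V|$, removing a carefully chosen vertex and reattaching it to an inductively obedient cover using the same cycle-based constraints.
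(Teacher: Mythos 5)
Your argument has a genuine gap, and it sits exactly where you say it does. You open by claiming that for a maximum $R$-clique $X$ the set $V\setminus X$ is a $B$-clique, but you never prove this (and you should not expect to: nothing in the hypotheses forces the obedient partition to use a \emph{maximum} $R$-clique). When you meet a pair $u,v\in V\setminus X$ with $uv\in E(R\setminus B)$ you abandon that claim and instead build $X'=N_v\cup\{v\}$ and $Y'=(X\setminus N_v)\cup\{u\}$, which cover only $X\cup\{u,v\}$. Everything after that is a sketch: ``in the critical cases an induced $C_5$ must appear\dots in the remaining cases $w$ attaches to the appropriate side.'' That case analysis is the entire content of the theorem, and it is not carried out. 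Two concrete problems with completing it as stated: (i) even if each extra vertex $w$ can individually be shown to be $R$-complete to $X'$ or $B$-complete to $Y'$, that does not make $X'\cup\{w_1,w_2,\dots\}$ a clique --- you must also verify that all the vertices assigned to the same side are pairwise adjacent there, which is a separate (and in the paper's analogous arguments, nontrivial) claim; (ii) $V\setminus X$ may contain several pairs with missing $B$-edges, and your construction privileges one such pair $\{u,v\}$ arbitrarily, with no argument that the resulting cover can absorb the others consistently.

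The route you mention in your last sentence --- induction on $|V|$ --- is essentially what the literature does, and what this paper does for the stronger Theorem \ref{3}: remove a vertex $p$, take an obedient cover $X\cup Y$ of $V\setminus\{p\}$ chosen to minimize $|X_b|+|Y_r|$ (the vertices of $X$ joined to $p$ only in blue, resp.\ of $Y$ joined to $p$ only in red), show that if both sets are nonempty one can build an alternating blue/red cycle between them, and then a shortest such cycle yields either an induced $C_4$ or an induced $C_5$ configuration that the hypotheses forbid (or, in the generalization, that is handled by Theorem \ref{2}). The minimality of $|X_b|+|Y_r|$ is the key device that replaces your ad hoc reassignment of $u$ and $v$, and it is what makes the case analysis close. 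As written, your proposal identifies the right obstructions ($C_4$ in $R$ between $N_u$ and $N_v$, the $B$-clique structure of $X\setminus N_v$) but does not assemble them into a proof.
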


The methods used in \cite{gy} are combinatorial. Using topological methods, Berger \cite{berger} proved the following theorem.

\begin{thm}[\cite{berger}]\label{9}
Let $B$ and $R$ be two graphs on the same vertex set and assume that $G(B,R)$ is the complete graph. If $B$ is chordal and $R$ is $C_4$-free, then $V$ is obedient.
\end{thm}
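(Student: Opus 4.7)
The plan is to induct on $|V|$, exploiting the chordality of $B$ via a simplicial vertex and using $C_4$-freeness of $R$ to handle the non-trivial extension step.

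First, pick a simplicial vertex $v$ of $B$, so that $N_B(v)$ is a $B$-clique. Both $B[V\setminus\{v\}]$ and $R[V\setminus\{v\}]$ inherit the hypotheses (chordal and $C_4$-free respectively, with $G(B,R)$ still complete on $V\setminus\{v\}$), so the inductive hypothesis yields $V\setminus\{v\}=X'\cup Y'$ with $X'$ an $R$-clique and $Y'$ a $B$-clique. If $v$ is $R$-adjacent to every vertex of $X'$, take $X:=X'\cup\{v\}$ and $Y:=Y'$; symmetrically, if $v$ is $B$-adjacent to every vertex of $Y'$, take $X:=X'$ and $Y:=Y'\cup\{v\}$. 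The hard case is when both extensions fail, producing $a\in X'$ with $va\in E(B)\setminus E(R)$ (so $a\in N_B(v)$) and $b\in Y'$ with $vb\in E(R)\setminus E(B)$.

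To handle this bad case I would perform the swap $X:=(X'\setminus N_B(v))\cup\{v\}$ and $Y:=Y'\cup(X'\cap N_B(v))$. The set $X$ is automatically an $R$-clique, since any $u\in X'\setminus N_B(v)$ satisfies $vu\notin E(B)$ hence $vu\in E(R)$, and $R$-adjacencies within $X'$ are inherited. The set $Y$ is a $B$-clique within $Y'$ and within $X'\cap N_B(v)\subseteq N_B(v)$; the only potential obstruction is the cross-edges between $Y'$ and $X'\cap N_B(v)$. I expect that if some $a\in X'\cap N_B(v)$ and $b\in Y'$ fail to be $B$-adjacent, then $ab\in E(R)$, and $C_4$-freeness of $R$ applied to the configuration involving $v$, $b$, $a$ and a carefully chosen fourth vertex (e.g.\ a vertex witnessing the failure of the $R$-extension near $b$ or $a$) forces a contradiction through an induced $4$-cycle in $R$.

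The main obstacle is producing the required induced $C_4$ uniformly for every problematic pair $(a,b)$; the argument may need an iterated swap, or the simplicial vertex $v$ may have to be chosen to minimise a suitable parameter in order to invoke a smaller instance of the same theorem on neighbourhoods. A safer fallback is Berger's original topological route: encode candidate $B$-sides as faces of the clique complex $K(B)$ and, for each $v\in V$, cover $K(B)$ by the subcomplex $F_v$ of $B$-cliques $Y$ such that $v\in Y$ or $(V\setminus Y)\cup\{v\}$ is an $R$-clique. A nerve-type theorem, combined with the topological tameness of the clique complex of a chordal graph, reduces the problem to verifying that each $F_v$ is contractible, and it is precisely this contractibility step where $C_4$-freeness of $R$ enters decisively. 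Either way, the crux is reconciling the global chordal structure of $B$ with the local constraints imposed by $C_4$-freeness of $R$ around $v$.
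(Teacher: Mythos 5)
This theorem is not proved in the paper: it is quoted from \cite{berger}, where it is established by topological (KKM-type) methods; within the present paper it also follows from Theorem \ref{3}, since a chordal $B$ is in particular $C_4$- and $C_5$-free, so $G(B,R)$ cannot contain a double $C_5$. Your combinatorial attempt, however, has a genuine gap precisely at the step you yourself flag. After the swap $X:=(X'\setminus N_B(v))\cup\{v\}$, $Y:=Y'\cup(X'\cap N_B(v))$, the set $X$ is indeed an $R$-clique, but to show $Y$ is a $B$-clique you must rule out a pair $a\in X'\cap N_B(v)$, $b\in Y'$ with $ab\notin E(B)$. From such a pair you correctly get $ab\in E(R)$ and, since $v$ is simplicial and $a\in N_B(v)$, also $vb\notin E(B)$, hence $vb\in E(R)$. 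But if $va\in E(R)$ then $\{v,a,b\}$ is simply an $R$-triangle and no contradiction arises; and if $va\notin E(R)$ you only have an induced path $a-b-v$ in $R$, and an induced $C_4$ requires a fourth vertex $c$ with $cv,ca\in E(R)$ and $cb\notin E(R)$. Neither of your candidate witnesses supplies this: the vertex $a_0\in X'$ witnessing the failed $R$-extension satisfies $va_0\notin E(R)$, so it cannot play the role of $c$, and the vertex $b_0\in Y'$ witnessing the failed $B$-extension satisfies $bb_0\in E(B)$ but possibly also $bb_0\in E(R)$, so the cycle need not be induced. So the contradiction is not forthcoming, and the difficulty is essential rather than cosmetic: even the strengthening in Theorem \ref{10} required the weak-clique-cutset machinery of \cite{cliques} (Theorem \ref{1}) rather than a one-step simplicial-vertex induction.

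Your fallback paragraph is a fair description of the strategy of \cite{berger} (cover the clique complex of $B$ by the subcomplexes $F_v$ and apply a KKM/nerve argument), but it omits exactly the two points that constitute the proof there: why the relevant complex associated with a chordal graph has the right topology, and why $C_4$-freeness of $R$ makes each $F_v$ contractible. As written, neither route closes, so the proposal does not yet constitute a proof.
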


Recently, Aharoni, Berger, Chudnovsky and Ziani generalized Theorems \ref{8} and \ref{9} as follow.

\begin{thm}[Theorem 1.7 of \cite{cliques}]\label{10}
Let $B$ and $R$ be $C_4$-free graphs with vertex set $V$ and suppose that $R$ is also $C_5$-free. If $G(B,R)$ is the complete graph, then $V$ is obedient.
\end{thm}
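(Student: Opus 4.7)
The plan is to prove Theorem \ref{10} by induction on $|V|$, building on the strategy of Theorem \ref{9}. The inductive base is trivial.

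First, if $R$ happens to be chordal, I would swap the roles of $B$ and $R$ and apply Theorem \ref{9} directly: since $R$ is chordal and $B$ is $C_4$-free, that theorem immediately yields an obedient decomposition. Otherwise, as $R$ is $C_4$-free and $C_5$-free but not chordal, $R$ must contain an induced cycle $C = v_1 v_2 \cdots v_k$ with $k\geq 6$. The $C_5$-free hypothesis on $R$ excludes precisely the shortest non-chordal cycle that would obstruct the argument in the Gy\'arf\'as--Lehel style, so only long induced cycles need to be handled.

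Second, I would exploit the structure of $V(C)$. Because $G(B,R)$ is complete, non-consecutive vertices of $C$ are $B$-adjacent, and the $C_4$-freeness of $B$ then imposes strong constraints: the sets $\{v_1,v_3,v_5,\ldots\}$ and $\{v_2,v_4,v_6,\ldots\}$ each induce $B$-cliques, and $B$-edges from $V(C)$ to the outside are heavily restricted. The plan is to select a vertex $v\in V(C)$ with a particularly well-behaved neighborhood, for example an $R$-simplicial-like vertex in $V\setminus V(C)$, and then apply the inductive hypothesis to $V\setminus\{v\}$ to obtain a decomposition $V\setminus\{v\}=X'\cup Y'$ where $X'$ is an $R$-clique and $Y'$ is a $B$-clique. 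The goal is then to extend this decomposition by placing $v$ into one of $X'$ or $Y'$, possibly after swapping a small number of cycle-vertices between the two parts.

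The main obstacle will be the absorption step: showing that such a $v$ (and, in effect, the entire cycle $C$) can always be absorbed into one of the two cliques without destroying cliqueness. Any failure of such an extension should yield two explicit non-neighbors of $v$ in the relevant graph; combining this pair with suitable vertices of $C$ and the cross-edges into $V\setminus V(C)$ guaranteed by completeness of $G(B,R)$ should force either an induced $C_4$ in $B$, or an induced $C_4$ or $C_5$ in $R$, contradicting the hypotheses. Verifying that these subcases terminate cleanly, and in particular that the $C_5$-free hypothesis on $R$ is both used and sufficient to close the gap left open when only $C_4$-freeness is available on both sides, will be the technical heart of the argument.
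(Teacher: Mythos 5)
There is a genuine gap: your proposal is a plan whose decisive step --- the ``absorption'' of a removed vertex into the inductive decomposition --- is exactly the content of the theorem, and it is left as a hope (``should force \dots will be the technical heart''). The machinery that actually makes this step work, both in \cite{cliques} and in the proof given in this paper (which derives Theorem \ref{10} as the special case of Theorem \ref{3} where $R$ is $C_5$-free, hence $G(B,R)$ has no double $C_5$), is absent from your sketch. Concretely, after removing a vertex $p$ and writing $V\setminus\{p\}=X\cup Y$, one must (i) choose the decomposition so as to \emph{minimize} $|X_b|+|Y_r|$, the number of vertices of $X$ joined to $p$ only in blue plus the number of vertices of $Y$ joined to $p$ only in red; (ii) use that minimality to show every vertex of $Y_r$ has a $B\setminus R$ neighbour in $X_b$ and vice versa, and hence build an \emph{alternating} cycle $x_1-x_2-\cdots-x_k-x_1$ with odd-indexed vertices in $Y_r$, even-indexed in $X_b$, alternating $B\setminus R$ and $R\setminus B$ edges; and (iii) analyze a shortest such cycle: $k=4$ forces an induced $C_5$ in $R$ on $\{p,x_1,\dots,x_4\}$ (indeed a double $C_5$), and $k>4$ forces the induced $C_5$ $x_2-x_3-p-x_1-x_4-x_2$ in $R$ --- and only at this point does the $C_5$-freeness of $R$ deliver the contradiction. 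Your observation that two non-neighbours of $v$ plus some cycle vertices ``should force'' a forbidden $C_4$ or $C_5$ is not substantiated, and without the minimization and the alternating cycle there is no mechanism producing the forbidden configuration. (In the more general Theorem \ref{3} one additionally needs the weak clique cutset lemma, Theorem \ref{1}, via Theorem \ref{2}; for Theorem \ref{10} itself the $C_5$-freeness of $R$ short-circuits that, as you correctly guessed.)

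A secondary problem is your starting point. Locating a long induced cycle $C$ of length at least $6$ in $R$ is a red herring: the obstruction to extending the inductive decomposition is an alternating two-coloured structure around the removed vertex, not a pre-existing long monochromatic cycle, and none of the known proofs needs to find one. Worse, the plan to ``absorb the entire cycle $C$'' into one of the two cliques cannot work as stated: the vertices of a long induced $R$-cycle are pairwise non-adjacent in $R$ except for consecutive pairs, so they cannot all sit in the $R$-clique, and consecutive vertices of $C$ need not be $B$-adjacent, so they need not all sit in the $B$-clique either. The reduction ``if $R$ is chordal, apply Theorem \ref{9}'' is fine but does not simplify the remaining case in any essential way.
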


Before we continue. We need the following definition.

\begin{defn}
We say that $G(B,R)$ contains a \textbf{double $C_5$} if there is a set $X$ of five vertices such that $B|X$ and $R|X$ are both induced $C_5$ in $B$ and $R$, where $B|X$ (or $R|X$) denotes the subgraph of $B$ (or $R$) induced by $X$.
\end{defn}

It is clear that if $B$ or $R$ is $C_5$-free, then $G(B,R)$ does not contains a double $C_5$. Very recently, Gy{\'a}rf{\'a}s and Lehel generalized Theorem \ref{10} as follow.

\begin{thm}[Theorem 3 of \cite{gy2}]\label{11}
Let $B$ and $R$ be $C_4$-free graphs with vertex set $V$, and suppose that $G(B,R)$ does not contains a double $C_5$. If $G(B,R)$ is the complete graph, then $V$ is obedient.
\end{thm}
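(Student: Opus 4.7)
The plan is to establish the stronger structural statement announced in the abstract — that whenever $B,R$ are $C_4$-free and $G(B,R)$ is complete, there is a decomposition $V=X\cup Y\cup Z$ with $X$ a $B$-clique, $Y$ an $R$-clique, $Z$ a clique in both $B$ and $R$, and every vertex of $Z$ lying on some double $C_5$ in $G(B,R)$. Theorem \ref{11} then falls out immediately: the ``no double $C_5$'' hypothesis forces $Z=\emptyset$, and $V=X\cup Y$ is an obedient decomposition.

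For the structural statement I would use an extremal argument. Fix an obedient subset $U=X\cup Y\subseteq V$ of maximum cardinality, with $X$ a $B$-clique and $Y$ an $R$-clique, and set $Z=V\setminus U$. The first step is to show, using $C_4$-freeness of $B$ and $R$ (recall that in a $C_4$-free graph two non-adjacent vertices have at most one common neighbor), that the maximality of $U$ forces $Z$ to be a clique in both $B$ and $R$. For instance, if $z_1,z_2\in Z$ were non-$B$-adjacent then $z_1z_2\in R$ by completeness of $G(B,R)$, and one should be able to perform a short sequence of swaps moving a bounded number of vertices between $X$ and $Y$ that absorbs $\{z_1,z_2\}$ into the $R$-side, contradicting the choice of $U$; a symmetric argument handles non-$R$-adjacency.

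The second and more delicate task will be certifying that every $z\in Z$ belongs to a double $C_5$. Starting from the fact that $z$ cannot be absorbed, there must be $x\in X$ with $zx\notin B$ and $y\in Y$ with $zy\notin R$, hence $zx\in R$ and $zy\in B$. Using the maximality of $U$ together with $C_4$-freeness applied to the pairs $(z,x)$, $(z,y)$ and to pairs inside $X,Y$, I expect to be able to locate three further vertices which together with $z,x,y$ form a five-set on which $B$ and $R$ both induce $C_5$'s. The hard part will be exactly this step: the naive local reduction ``if $R|X_0$ is an induced $C_5$ then $B|X_0$ is forced to be the complementary $C_5$'' fails, because $B|X_0$ can carry the diagonal $C_5$ together with extra chords (for example both $v_1v_2$ and $v_1v_5$) without producing any induced $C_4$ in $B|X_0$. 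So the double $C_5$ around $z$ cannot in general be read off inside a five-set already carrying a $C_5$ of $R$; it will have to be assembled globally, by tracing common neighbors of the obstructing pairs, and it is here that the two $C_4$-free hypotheses must be used in tandem.

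Base cases such as $|V|\leq 5$ will be routine to verify by direct check. Modulo the double $C_5$ certification described above, the proof is an extremal-then-case-analysis argument in the spirit of \cite{cliques}; once the structural theorem is established, Theorem \ref{11} needs no further work.
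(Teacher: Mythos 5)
There is a genuine gap: your proposal is a program rather than a proof, and the two steps you defer are exactly where all of the content of the theorem lives. Most seriously, your second step --- certifying that every $z\in Z$ lies on a double $C_5$ --- is, in the case where $G(B,R)$ contains no double $C_5$ at all, literally the statement that $Z=\emptyset$, i.e.\ that a maximum obedient set is all of $V$; that is Theorem \ref{11} itself, so nothing has been reduced. You correctly observe that an induced $C_5$ in $R$ need not force the complementary $C_5$ in $B$ (extra blue chords such as $v_1v_2$ and $v_1v_5$ create no induced blue $C_4$), but you do not resolve this difficulty, and it is precisely the hard case. Your first step is also unsubstantiated: no argument is given that the complement of a \emph{maximum} obedient set is a clique in both $B$ and $R$, and the paper's decomposition gives no support for this, since its set $Z=A_4\cup\{v_4\}$ is built around a fixed double $C_5$ via Lemma \ref{4}, with no claim that $X\cup Y$ is a maximum obedient set. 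Finally, note that the logical order you propose is the reverse of the paper's: the paper proves Theorem \ref{11} first and then uses it inside the proof of the structural theorem (to handle the no-double-$C_5$ case), so deriving Theorem \ref{11} from the structural theorem is only non-circular if you supply a genuinely independent proof of the latter --- which you have not done.

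For comparison, the paper's actual argument is an induction on $|V|$: remove a vertex $p$, take an obedient partition $X\cup Y$ of $V\setminus\{p\}$ minimizing $|X_b|+|Y_r|$ (the vertices of $X$ joined to $p$ only in blue, resp.\ of $Y$ joined to $p$ only in red), and show that if both sets are nonempty one can build an alternating cycle between $X_b$ and $Y_r$ using edges of $B\setminus R$ and $R\setminus B$. A shortest such cycle of length $4$ produces a double $C_5$ through $p$, contradicting the hypothesis; a longer one produces an induced $C_5$ in $R$ with two $R\setminus B$ edges, which is handled by the weak-clique-cutset machinery of Theorem \ref{2} (resting on Theorem \ref{1}). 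If you want to pursue your route, you would need to replace that machinery with a complete absorption argument, and in particular handle the configuration you yourself flagged as the obstruction.
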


In section 2, we start with some basic definitions and prove the following theorem which generalize Lemma 3.4 of \cite{cliques}.

\begin{thm}\label{12}
Let $B$ and $R$ be two graphs with vertex set $V$. Assume that the following hold:
\begin{itemize}
  \item $B$ and $R$ are $C_4$-free graphs.
  \item $G(B,R)$ does not contains a double $C_5$.
  \item $G(B,R)$ is the complete graph.
  \item Every proper subset $U\subset V$ is obedient.
\end{itemize}
If $v_1-v_2-v_3-v_4-v_5-v_1$ is an induced $C_5$ in $R$, where $v_1v_2, v_3v_4\in E(R\setminus B)$, then $V$ is obedient.
\end{thm}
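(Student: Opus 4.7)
The plan is to apply the obedience hypothesis to the proper subset $V\setminus\{v_5\}$, combined with the forced $C_4$-free structure on $\{v_1,\ldots,v_5\}$, in order to extend the resulting decomposition to all of $V$.

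I first derive the structure on $\{v_1,\ldots,v_5\}$. Since the $R$-cycle is induced and $G(B,R)$ is complete, the five pentagram pairs $v_1v_3,v_1v_4,v_2v_4,v_2v_5,v_3v_5$ all lie in $E(B)$. Applying $C_4$-freeness of $B$ to the potential induced cycle through $v_1,v_3,v_2,v_4$ (using $v_1v_2,v_3v_4\notin E(B)$) forces $v_2v_3\notin E(B)$. The no-double-$C_5$ hypothesis then rules out $v_1v_5,v_4v_5\notin E(B)$: otherwise the pentagram itself would be an induced $C_5$ in $B$ on $\{v_1,\ldots,v_5\}$, forming a double $C_5$ with the given $R$-cycle. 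The remaining hypotheses are symmetric under the involution $v_1\leftrightarrow v_4$, $v_2\leftrightarrow v_3$, so I may assume $v_1v_5\in E(B)$.

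Next, let $V\setminus\{v_5\}=X\cup Y$ be an obedient decomposition with $X$ an $R$-clique and $Y$ a $B$-clique. The induced $B$-graph on $\{v_1,v_2,v_3,v_4\}$ has edges $v_1v_3,v_1v_4,v_2v_4$ and the induced $R$-graph has edges $v_1v_2,v_2v_3,v_3v_4$; both have clique number $2$. Enumerating the three $B$-pairs and three $R$-pairs, the only complementary pair is $\{v_1,v_4\}$ in $B$ and $\{v_2,v_3\}$ in $R$, so $\{v_2,v_3\}\subseteq X$ and $\{v_1,v_4\}\subseteq Y$. Since $v_2v_5\notin E(R)$, the vertex $v_5$ cannot join $X$, so the natural candidate decomposition of $V$ is $X'=X$, $Y'=Y\cup\{v_5\}$, which is obedient iff $v_5y\in E(B)$ for every $y\in Y$; if this holds, we are done.

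Otherwise set $Y_{\mathrm{bad}}:=\{y\in Y:v_5y\notin E(B)\}$, so $v_5y\in E(R)$ for every $y\in Y_{\mathrm{bad}}$. I then plan a swap: define $\tilde X:=X\cup Y_{\mathrm{bad}}$ and $\tilde Y:=(Y\setminus Y_{\mathrm{bad}})\cup\{v_5\}$, and argue that this pair is an obedient decomposition of $V$. This requires (i) every $y\in Y_{\mathrm{bad}}$ is $R$-adjacent to every element of $X\cup(Y_{\mathrm{bad}}\setminus\{y\})$, and (ii) $v_5$ is $B$-adjacent to every element of $Y\setminus Y_{\mathrm{bad}}$; (ii) is immediate from the definition. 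The key adjacencies $yv_2,yv_3\in E(R)$ for $y\in Y_{\mathrm{bad}}$ should come from $C_4$-freeness of $R$ applied to the putative cycles $y,v_2,v_1,v_5$ and $y,v_3,v_4,v_5$, combined with $C_4$-freeness of $B$ on quadrilaterals such as $v_1,v_4,v_2,y$, and with the established structure on $\{v_1,\ldots,v_5\}$.

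The main obstacle I anticipate is establishing condition (i) in full, particularly when $|Y_{\mathrm{bad}}|>1$, since a single pairwise $C_4$-argument is unlikely to handle all simultaneous obstructions. I expect that completing the argument requires invoking obedience of a second proper subset (for example $V\setminus\{v_2\}$, $V\setminus\{v_3\}$, or $V\setminus\{y\}$ for a bad $y$) and cross-referencing the two decompositions, using the full strength of both the $C_4$-free and the no-double-$C_5$ hypotheses to rule out each remaining configuration.
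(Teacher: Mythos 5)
There is a genuine gap, and it sits exactly where you flag it: step (i) of your swap is not a loose end but the entire content of the theorem, and your chosen route cannot close it. First, a smaller omission: you never establish $v_4v_5\in E(B)$ (in the paper this follows from $C_4$-freeness of $B$ applied to $v_1-v_4-v_2-v_5-v_1$ once $v_1v_5\in E(B)$ is assumed); without it $v_4$ itself could lie in $Y_{\mathrm{bad}}$, and $v_4$ cannot be swapped into $X$ since $v_2v_4\notin E(R)$. More seriously, even granting that every $y\in Y_{\mathrm{bad}}$ is $R$-complete to $\{v_2,v_3\}$ (which does follow, via the dichotomy the paper proves: each outside vertex is $R$-complete to $\{v_2,v_3\}$ or $B$-complete to $\{v_1,v_4,v_5\}$), nothing forces $y$ to be $R$-adjacent to an arbitrary $x\in X$. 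Concretely, a vertex $x\in X$ with $xv_5,xy\in E(B\setminus R)$, $xv_1,xv_4\in E(B)$ and $xv_2,xv_3\in E(R)$ is consistent with all the hypotheses and with every $C_4$/double-$C_5$ argument available (the only candidate $R$-cycle $x-v_2-y-v_3-x$ has the chord $v_2v_3$), so $\tilde X=X\cup Y_{\mathrm{bad}}$ need not be an $R$-clique. The underlying difficulty is that the obedient decomposition of $V$ need not refine, or be obtainable by a bounded local repair of, the given decomposition of $V\setminus\{v_5\}$: vertices far from the pentagon may have to be reassigned globally, and "invoking obedience of a second proper subset and cross-referencing" is not a proof.

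The missing idea is the weak clique cutset lemma (Theorem \ref{1}, i.e.\ Lemma 3.2 of \cite{cliques}), which is precisely the tool that performs this global reassembly. The paper's proof never touches a decomposition of $V\setminus\{v_5\}$: it shows $\{v_1,v_4,v_5\}$ is a $B$-triangle and $\{v_2,v_3\}$ an $R$-edge, proves the dichotomy above, shows the set $A$ of outside vertices not $R$-complete to $\{v_2,v_3\}$ is a $B$-clique that is $B$-complete to $\{v_1,v_4,v_5\}$, and observes that $A\cup\{v_1,v_4,v_5\}$ is then a weak clique cutset in $B$ separating $\{v_2,v_3\}$ from everything else in $B\setminus R$; Theorem \ref{1} then yields obedience of $V$. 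To salvage your argument you would either have to reprove something equivalent to that lemma or restructure along these lines.
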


In section 3, we use Theorem \ref{12} to give another proof of Theorem \ref{11}, which we found independently. In section 4, we generalize Theorem \ref{11} by proving the following.

\begin{thm}\label{13}
Let $B$ and $R$ be two $C_4$-free graphs on the same vertex set $V$ and assume that $G(B,R)$ is the complete graph. Then there exists an $B$-clique $X$, an $R$-clique $Y$ and a clique $Z$ in $B$ and $R$, such that $V=X\cup Y\cup Z$. Further, if $x\in Z$ then $x$ is one of the vertices of some double $C_5$ in $G(B,R)$.
\end{thm}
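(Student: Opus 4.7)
The plan is to proceed by induction on $|V|$. For the base cases, including $|V|\le 1$ or whenever $G(B,R)$ contains no double $C_5$, Theorem \ref{11} gives $V=X\cup Y$ with $X$ a $B$-clique and $Y$ an $R$-clique, so we take $Z=\emptyset$ and the extra condition on $Z$ is vacuous.

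For the inductive step, assume $G(B,R)$ contains a double $C_5$ $D=\{v_1,\dots,v_5\}$. Choose any $v\in D$ and apply the inductive hypothesis to the induced subgraphs $B|_{V\setminus\{v\}}$ and $R|_{V\setminus\{v\}}$, which remain $C_4$-free with complete union. This yields a decomposition $V\setminus\{v\}=X'\cup Y'\cup Z'$ where $X'$ is a $B$-clique, $Y'$ is an $R$-clique, $Z'$ is a clique in both $B$ and $R$, and every vertex of $Z'$ lies in some double $C_5$ of $G(B,R)|_{V\setminus\{v\}}$, hence of $G(B,R)$. We then attempt three simple extensions: if $v$ is $B$-adjacent to every vertex of $X'$, set $X:=X'\cup\{v\}$, $Y:=Y'$, $Z:=Z'$; if $v$ is $R$-adjacent to every vertex of $Y'$, set $Y:=Y'\cup\{v\}$; and if $v$ is adjacent in both $B$ and $R$ to every vertex of $Z'$, set $Z:=Z'\cup\{v\}$ (the extra condition on $Z$ survives because $v\in D$). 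In any of these three cases we obtain the desired decomposition of $V$.

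The main obstacle is the case in which all three simple extensions fail simultaneously, producing witnesses $x\in X'$ with $vx\in R\setminus B$, $y\in Y'$ with $vy\in B\setminus R$, and $z\in Z'$ with $vz\notin B\cap R$ (using completeness of $G(B,R)$ to replace each missing edge by one in the other colour). To resolve this, the plan is to use the rigid structure of $D$ around $v$---within $D$ the vertex $v$ has exactly two $B$-neighbours and two $R$-neighbours, and the other pairs in $D$ form the complementary $C_5$'s---together with $C_4$-freeness of $B$ and $R$. The intention is either to derive a contradiction in the form of a forbidden induced $C_4$ (for instance on a four-set of the form $\{v,x,v',y\}$ or $\{v,z,v',x\}$ for an appropriate neighbour $v'\in D$ of $v$), or to perform a swap: move the witness vertex $x$ (or $y$) out of $X'$ (resp.\ $Y'$) and into a larger both-clique $Z''\supseteq Z'\cup\{v\}$, the feasibility of the swap again being forced by $C_4$-freeness. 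The freedom to vary which of the five vertices of $D$ plays the role of $v$ provides the flexibility needed to ensure that at least one choice succeeds, and carrying out this final case analysis cleanly is the main technical content of the proof.
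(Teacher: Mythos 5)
Your reduction to the case where all three simple extensions fail is where the proof actually has to happen, and you have not carried it out; as written this is a genuine gap, not an omitted routine verification. The core difficulty is that the decomposition $V\setminus\{v\}=X'\cup Y'\cup Z'$ supplied by the inductive hypothesis has no a priori relationship to the double $C_5$ $D$ containing $v$: the witnesses $x\in X'$, $y\in Y'$, $z\in Z'$ need not lie in $D$, and the adjacencies among $x,y,z$ and a neighbour $v'\in D$ of $v$ are completely uncontrolled, so nothing forces an induced $C_4$ on a set such as $\{v,x,v',y\}$. Moreover the inductive statement is too weak to drive the extension step: a vertex $v$ can fail to extend every part of one particular decomposition of $V\setminus\{v\}$ even though some other decomposition does extend, so you would need at least a minimality device (compare the proof of the double-$C_5$-free case, where $X,Y$ are chosen minimizing $|X_b|+|Y_r|$), and with three parts no such device is proposed. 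Varying $v$ over the five vertices of $D$ gives only five attempts and no argument that one of them succeeds.

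The paper's proof avoids induction in the double-$C_5$ case entirely. It fixes one double $C_5$, say $v_1-v_2-v_3-v_4-v_5-v_1$ with blue edges and red diagonals, and proves a classification lemma (Lemma \ref{4}): every vertex $x$ outside the cycle is either $B$-complete to it, or $R$-complete to it, or has a unique \emph{shared edge} $xv_i$ lying in both $B$ and $R$, with $x$ joined in $B\setminus R$ to the $B$-neighbours of $v_i$ on the cycle and in $R\setminus B$ to its $R$-neighbours. This partitions $V\setminus\{v_1,\dots,v_5\}$ into $M\cup N\cup A_1\cup\cdots\cup A_5$, and a few further $C_4$-freeness arguments show that $X=M\cup A_1\cup A_2\cup\{v_1,v_2\}$ is a $B$-clique, $Y=N\cup A_3\cup A_5\cup\{v_3,v_5\}$ is an $R$-clique, and $Z=A_4\cup\{v_4\}$ is a clique in both, with every vertex of $Z$ lying on a double $C_5$. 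If you wish to salvage your outline, that classification lemma is the missing idea; once it is available, the induction becomes unnecessary.
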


As a corollary of Theorem \ref{13}, we obtain the following theorems.

\begin{thm}
If $B$ and $R$ are two $C_4$-free graphs on the same vertex set $V$, then $$\omega(G(B,R))\leq \omega(B)+\omega(R)+\omega(H(B,R)).$$
\end{thm}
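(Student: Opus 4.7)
The plan is to deduce this bound as a direct corollary of Theorem \ref{13}. Let $W \subseteq V$ be a maximum clique in $G(B,R)$, so that $|W| = \omega(G(B,R))$. I would then pass to the induced subgraphs $B' := B|W$ and $R' := R|W$ and observe the following: $B'$ and $R'$ remain $C_4$-free (induced subgraphs of $C_4$-free graphs are $C_4$-free), and by the choice of $W$, the graph $G(B',R')$ is the complete graph on $W$. Thus all hypotheses of Theorem \ref{13} are satisfied for the pair $(B',R')$.

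Applying Theorem \ref{13} to $(B',R')$, I obtain a decomposition $W = X \cup Y \cup Z$, where $X$ is a $B'$-clique, $Y$ is an $R'$-clique, and $Z$ is a clique in both $B'$ and $R'$. Since any $B'$-clique is in particular a $B$-clique (and likewise for $R$), we have $|X| \leq \omega(B)$ and $|Y| \leq \omega(R)$. For $Z$: since $Z$ is simultaneously a clique in $B$ and in $R$, every pair of vertices in $Z$ is adjacent in both $B$ and $R$, which is precisely the adjacency relation of $H(B,R)$. Hence $Z$ is a clique in $H(B,R)$, giving $|Z| \leq \omega(H(B,R))$.

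Putting the three inequalities together yields
\[
\omega(G(B,R)) = |W| \leq |X| + |Y| + |Z| \leq \omega(B) + \omega(R) + \omega(H(B,R)),
\]
which is the required bound.

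There is essentially no obstacle here once Theorem \ref{13} is in hand: the only point to verify carefully is that passing from $V$ to the maximum clique $W$ preserves both the $C_4$-free hypothesis and the completeness of $G$, and that the common-clique piece $Z$ furnished by Theorem \ref{13} is automatically a clique in $H(B,R)$ by definition of $H$. If one wanted to sharpen the statement, one could instead observe via the ``Further'' clause of Theorem \ref{13} that each vertex of $Z$ lies in a double $C_5$ of $G(B,R)|W$, but this strengthening is not needed for the inequality itself.
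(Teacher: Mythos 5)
Your proposal is correct and follows essentially the same route as the paper: restrict to a maximum clique $T$ of $G(B,R)$, apply Theorem \ref{13} to $B|T$ and $R|T$, and bound $|X|$, $|Y|$, $|Z|$ by $\omega(B)$, $\omega(R)$, $\omega(H(B,R))$ respectively. Your extra remark that $Z$ being a clique in both $B$ and $R$ makes it a clique of $H(B,R)$ is exactly the (implicit) justification the paper uses.
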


\begin{thm}
If $B$ and $R$ are two $C_4$-free graphs on the same vertex set $V$ then $$\omega(G(B,R))\leq \omega(B)+\omega(R)+\frac{1}{2}\min(\omega(B),\omega(R)).$$
\end{thm}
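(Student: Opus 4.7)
Let $K$ be a maximum clique in $G(B,R)$, so $|K|=\omega(G(B,R))$. The induced graphs $B|K$ and $R|K$ are $C_4$-free with complete union, so Theorem \ref{13} applies on $K$: we obtain (disjoint, WLOG) sets $X,Y,Z\subseteq K$ partitioning $K$, with $X$ a $B$-clique, $Y$ an $R$-clique, $Z$ a clique in both $B$ and $R$, and every $z\in Z$ the vertex of some double $C_5$ whose five vertices lie in $K$. In particular $|K|=|X|+|Y|+|Z|$, $|X|\le\omega(B)$, $|Y|\le\omega(R)$, and $|Z|\le\min(\omega(B),\omega(R))$.

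Next I examine the double $C_5$ attached to an arbitrary $z\in Z$. Any two vertices of $Z$ are adjacent in both $B$ and $R$, while in a double $C_5$ each pair is adjacent in exactly one of the two graphs; hence such a $C_5$ meets $Z$ in exactly the vertex $z$. Among the other four vertices the $B$-$C_5$ has clique number $2$, so the $B$-clique $X$ contains at most two of them; symmetrically at most two lie in $Y$; thus exactly two lie in $X$ and two in $Y$. A direct check shows that the two $B$-neighbors of $z$ in the $C_5$ are these two $Y$-vertices while the two $R$-neighbors are the two $X$-vertices.

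Assume WLOG $\omega(B)\le\omega(R)$. The plan is to establish the key inequality
\[
|X|+\tfrac{1}{2}|Z|\le \omega(B),
\]
from which
\[
|K|=|X|+|Y|+|Z|\le \omega(B)+\omega(R)+\tfrac{1}{2}|Z|\le \omega(B)+\omega(R)+\tfrac{1}{2}\min(\omega(B),\omega(R))
\]
follows at once, using $|Z|\le\min(\omega(B),\omega(R))$. To prove the key inequality I would associate to each $z\in Z$ the $B$-edge in $X$ consisting of its two $R$-neighbors from the double $C_5$, producing $|Z|$ (a priori not distinct) $B$-edges inside $X$. Using $C_4$-freeness of $B$ and $R$, together with the decomposition-forced fact that no $z\in Z$ is $B$-adjacent to all of $X$ (its two $R$-neighbors in its $C_5$ lie in $X$), one should be able to exhibit a $B$-clique of size at least $|X|+\tfrac{1}{2}|Z|$ inside $K$; e.g.\ by a swapping argument that exchanges half of $Z$ with a suitable subset of $Y$-vertices that are $B$-adjacent to all of $X$.

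The main obstacle is this final step. The naive hope — that distinct $z$'s contribute disjoint pairs in $X$, giving $|X|\ge 2|Z|$ directly — fails: two $Z$-vertices sharing both $X$-neighbors of their $C_5$s yield a 4-cycle in $R$ through $\{z,z',x_1,x_2\}$, but the $R$-edge $zz'$ (from the $R$-cliqueness of $Z$) is a chord and so $C_4$-freeness is not violated. One must therefore combine the $X$-side analysis with the symmetric $Y$-side, using simultaneous $C_4$-freeness of $B$ and $R$ together with the global coherence of the decomposition $(X,Y,Z)$; I expect the $\tfrac{1}{2}$ factor to emerge precisely from the fact that each $z\in Z$ consumes simultaneous "capacity" in both the $B$- and $R$-clique budgets via its fixed pair of $X$-neighbors and $Y$-neighbors.
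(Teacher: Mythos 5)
Your setup is sound: restricting to a maximum clique $K$, invoking Theorem \ref{13}, and the structural analysis showing that the double $C_5$ attached to each $z\in Z$ meets $Z$ only in $z$ and meets $X$ and $Y$ in exactly two vertices each (the $R$-neighbors and $B$-neighbors of $z$, respectively) are all correct. But the proof then reduces everything to the inequality $|X|+\tfrac{1}{2}|Z|\le\omega(B)$, and this is exactly where it stops: you describe a hoped-for swapping/augmentation argument, note that the naive disjointness argument fails, and explicitly concede that ``the main obstacle is this final step.'' That step is the entire content of the theorem beyond the trivial bound $\omega(B)+\omega(R)+\min(\omega(B),\omega(R))$, so the proposal is incomplete. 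Worse, it is not clear that your target inequality is even true: it asserts the existence of a $B$-clique strictly larger than $X$ whenever $Z\neq\emptyset$, and nothing in your analysis (nor in the paper) produces such a clique; the vertices of $Z$ are, by construction, non-$B$-adjacent to part of $X$, so $X\cup Z$ is not a candidate.

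The paper avoids this difficulty entirely by not treating Theorem \ref{13} as a black box. In its proof the vertex set decomposes as $M\cup N\cup A_1\cup\dots\cup A_5\cup\{v_1,\dots,v_5\}$, and which of the five sets $A_i\cup\{v_i\}$ ends up as $Z$ is a free choice of labeling of the double $C_5$. Choosing $A_4$ to be a smallest $A_i$ gives $2|Z|=2|A_4|+2\le|A_1|+|A_2|+2\le|X|\le\omega(B)$ and symmetrically $2|Z|\le|A_3|+|A_5|+2\le|Y|\le\omega(R)$, hence $|Z|\le\tfrac{1}{2}\min(\omega(B),\omega(R))$ directly, with no augmentation of $X$ needed. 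The five-fold symmetry among the $A_i$ is precisely the information your black-box use of Theorem \ref{13} discards; to repair your argument you should go back into the proof of Theorem \ref{7} and add this minimality choice, rather than trying to enlarge $X$.
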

\section{Obedient Sets}
The main result of this section is Theorem \ref{2} which proved in \cite{cliques} when $B$ is $C_5$-free (see Lemma 3.4 of \cite{cliques}). Using a similar argument, we generalize it when $G(B,R)$ does not contains a double $C_5$. First, we recall some basic definitions. Let $G=(V,E)$ be a graph.

\begin{defn}
An \textbf{induced subgraph} on a subset $S$ of $V$, denoted by $G|S$, is a graph whose vertex set is $S$ and whose edge set is $\{uv~|~u,v\in S~\mathrm{and}~uv\in E\}$.
\end{defn}

\begin{defn}
For two disjoint subsets $X$ and $Y$ of $V$, we say that $X$ is \textbf{$G$-complete (anticomplete}) to $Y$ if every vertex of $X$ is adjacent (non-adjacent) to every vertex of $Y$.
\end{defn}

Let $B$ and $R$ be two graphs with vertex set $V$. We denote by $B\setminus R$ the graph with vertex set $V$ such that two vertices are adjacent in $B\setminus R$ if and only if they are adjacent in $B$ and non-adjacent in $R$.

\begin{defn}
\begin{itemize}
  \item A set $C\subset V$ is a \textbf{cutset} if there exist disjoint nonempty $P, Q\subset V$ such that $V\setminus C= P\cup Q$ and $P$ is anticomplete to $Q$ in $G$.
  \item A set $C\subset V$ is a \textbf{clique cutset} if it is a cutset and $C$ is a clique of $G$.
  \item A \textbf{weak clique cutset} in $B$ is a clique $C$ of $B$ that is a cutset in $B\setminus R$.
\end{itemize}
\end{defn}
\begin{defn}
For a graph $H$ we say that \textbf{$G$ \textbf{contains} $H$} if some induced subgraph of $G$ is isomorphic to $H$. We say that $G$ is \textbf{$H$-free} if $G$ is not contains $H$.
\end{defn}

Before we prove Theorem \ref{2}, we need the following useful theorem.
\begin{thm}\label{1}\emph{(Lemma 3.2 of \cite{cliques})}\\
Let $B$ and $R$ be $C_4$-free graphs with vertex set $V$, and assume that every proper subset $U\subset V$ is obedient. Assume also that $G(B,R)$ is a complete graph. If there is a weak clique cutset in $B$ (or in $R$), then $V$ is obedient.
\end{thm}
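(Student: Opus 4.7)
The plan is to use the weak clique cutset to split $V$ into two proper obedient subsets and then paste their decompositions back together, with $C_4$-freeness controlling the overlap. Let $C$ be a weak clique cutset in $B$: $C$ is a $B$-clique and $V\setminus C=P\sqcup Q$ with $P,Q$ nonempty and anticomplete in $B\setminus R$. The first observation is that $P$ is in fact $R$-complete to $Q$: for $p\in P$ and $q\in Q$, completeness of $G(B,R)$ forces $pq\in B\cup R$, while the anticompleteness in $B\setminus R$ forbids $pq\in B\setminus R$, leaving $pq\in R$. This is the structural fact that lets edges move across the cutset.

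Both $V_1:=P\cup C$ and $V_2:=Q\cup C$ are proper subsets of $V$ (since $Q$ and $P$ are nonempty), so by hypothesis each is obedient: write $V_1=X_1\cup Y_1$ and $V_2=X_2\cup Y_2$ with each $X_i$ an $R$-clique and each $Y_i$ a $B$-clique. The natural candidate for a global decomposition is $X:=X_1\cup X_2$ and $Y:=Y_1\cup Y_2$. For this candidate to work I need two compatibility conditions: (i) $X_1\cap C=X_2\cap C$, which makes $X$ an $R$-clique because the only remaining cross-pairs lie in $P\times Q$, and these are $R$-adjacent by the previous paragraph; and (ii) $Y_1\cap P$ must be $B$-complete to $Y_2\cap Q$, which makes $Y$ a $B$-clique.

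The main obstacle is arranging (i) and (ii) simultaneously. I would force (i) by re-choosing the decomposition of $V_2$ so that its trace on $C$ equals that of $V_1$: since $C$ is a $B$-clique, vertices of $C$ can be slid between $X_2$ and $Y_2$ with only local constraints, and a short case check using $C_4$-freeness of $B$ and $R$ eliminates obstructions. For (ii) I would argue by contradiction: if $p\in Y_1\cap P$ and $q\in Y_2\cap Q$ have $pq\notin B$ then $pq\in R$ by the first paragraph, and pairing $p,q$ with two $B$-neighbors $c,c'\in Y_1\cap C=Y_2\cap C$ produces a short cycle in $B$ or $R$ that is forced to be induced, contradicting $C_4$-freeness. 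The degenerate subcases in which $Y_1\cap P$ or $Y_2\cap Q$ is empty are handled separately, absorbing the corresponding part of $P$ or $Q$ into the global $R$-clique via the $R$-completeness of $P$ to $Q$.
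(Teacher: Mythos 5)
The paper does not actually prove this statement; it imports it verbatim as Lemma 3.2 of \cite{cliques}, so your attempt has to be judged on its own. Your opening move is sound: $P$ is indeed $R$-complete to $Q$, and splitting $V$ into the two proper obedient sets $P\cup C$ and $Q\cup C$ is the natural (and correct) starting point. But both of the steps that carry the actual weight of the lemma are asserted rather than proved, and one of them is argued incorrectly. First, the ``sliding'' of $C$-vertices between $X_2$ and $Y_2$ is not governed by local constraints inside $C$: to move $c\in C$ into $X_2$ you need $c$ to be $R$-adjacent to every vertex of $X_2\cap Q$, and to move it into $Y_2$ you need $c$ to be $B$-adjacent to every vertex of $Y_2\cap Q$; neither is supplied by $C$ being a $B$-clique, and no ``short case check'' is exhibited. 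This is precisely where the difficulty of the lemma sits. Moreover, (i) aligns only $X_1\cap C$ with $X_2\cap C$, while your later argument quietly uses $Y_1\cap C=Y_2\cap C$ (the covers $X_i\cup Y_i$ need not be partitions, so one does not follow from the other), and the cross-pairs in $(Y_1\cap P)\times(Y_2\cap C)$ and $(Y_1\cap C)\times(Y_2\cap Q)$ are never addressed at all.

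Second, the contradiction you propose for (ii) does not materialize: in the cycle $p-c-q-c'-p$ all four sides lie in $B$, but so does the chord $cc'$, since $C$ is a $B$-clique; hence the cycle is not induced and $C_4$-freeness of $B$ says nothing. No $C_4$ in $R$ is available either, because the colours of $pc,pc',qc,qc'$ in $R$ are unknown. In fact (ii) is simply false for an arbitrary choice of decompositions: take $V=\{p,c,q\}$ with $B$ the path $p-c-q$ and $R$ the single edge $pq$; then $C=\{c\}$ is a weak clique cutset, and the decompositions $Y_1=\{p,c\}$, $X_1=\emptyset$, $Y_2=\{q,c\}$, $X_2=\emptyset$ satisfy your condition (i) while $Y_1\cap P=\{p\}$ fails to be $B$-complete to $Y_2\cap Q=\{q\}$. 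So (ii) cannot be established by any local $C_4$ argument; it has to be built into a careful, justified choice of the decompositions of $P\cup C$ and $Q\cup C$, which is exactly the part of the proof that is missing.
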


\begin{thm}\label{2}
Let $B$ and $R$ be two graphs with vertex set $V$. Assume that the following hold:
\begin{itemize}
  \item $B$ and $R$ are $C_4$-free graphs.
  \item $G(B,R)$ does not contains a double $C_5$.
  \item $G(B,R)$ is the complete graph.
  \item Every proper subset $U\subset V$ is obedient.
\end{itemize}
If $v_1-v_2-v_3-v_4-v_5-v_1$ is an induced $C_5$ in $R$, where $v_1v_2, v_3v_4\in E(R\setminus B)$, then $V$ is obedient.
\end{thm}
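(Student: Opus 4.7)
The plan is to adapt the proof of Lemma 3.4 of \cite{cliques}, which assumed $B$ to be $C_5$-free, by replacing that hypothesis wherever it is used by the weaker one that $G(B,R)$ contains no double $C_5$. The new ingredient is a tight structural description of $B|\{v_1,\ldots,v_5\}$. Since $R|\{v_1,\ldots,v_5\}$ is an induced $C_5$, the five diagonals $v_1v_3, v_2v_4, v_3v_5, v_1v_4, v_2v_5$ are non-edges of $R$ and hence edges of $B$. By hypothesis $v_1v_2, v_3v_4 \notin E(B)$, so only the three cycle edges $v_2v_3$, $v_4v_5$, $v_5v_1$ remain to be determined in $B$. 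A short case check using only the $C_4$-freeness of $B$ rules out $v_2v_3 \in E(B)$ (else $v_1-v_3-v_2-v_4-v_1$ is an induced $C_4$ in $B$), $v_4v_5 \in E(B)$ without $v_5v_1$ (else $v_1-v_3-v_5-v_4-v_1$ is an induced $C_4$), and symmetrically $v_5v_1 \in E(B)$ without $v_4v_5$ (else $v_1-v_4-v_2-v_5-v_1$ is an induced $C_4$). If none of the three lies in $B$, then $B|\{v_1,\ldots,v_5\}$ coincides with the graph of diagonals, which is itself an induced $C_5$, yielding a double $C_5$ and contradicting our hypothesis. Hence $v_4v_5, v_5v_1 \in E(B)$ while $v_2v_3 \notin E(B)$; in particular $v_5$ is $B$-complete to $\{v_1,\ldots,v_4\}$ and the three consecutive edges $v_1v_2, v_2v_3, v_3v_4$ all lie in $E(R)\setminus E(B)$.

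This pins down the obedient decomposition on $\{v_1,\ldots,v_5\}$: in any decomposition $V = X \cup Y$ with $X$ an $R$-clique and $Y$ a $B$-clique, each of $v_1v_2, v_2v_3, v_3v_4$ must meet $X$ (as these pairs are not $B$-edges), while the only $R$-cliques contained in $\{v_1,\ldots,v_5\}$ are the singletons and the five cycle edges. Together these force $X \cap \{v_1,\ldots,v_5\} = \{v_2,v_3\}$ and $Y \cap \{v_1,\ldots,v_5\} = \{v_1,v_4,v_5\}$. Applying the same forcing to any obedient decomposition $V\setminus\{v_5\} = X_0 \cup Y_0$ guaranteed by the hypotheses, we obtain $\{v_2,v_3\} \subseteq X_0$ and $\{v_1,v_4\} \subseteq Y_0$. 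Since $v_5v_2 \notin E(R)$ and $v_2 \in X_0$, the only way to extend this decomposition to $V$ is to add $v_5$ to $Y_0$.

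The main obstacle, and the step that follows the blueprint of \cite{cliques} most closely, is to verify that $v_5$ is $B$-adjacent to every vertex of $Y_0$. Suppose for contradiction that some $w \in Y_0$ satisfies $v_5 w \in E(R) \setminus E(B)$; then $w$ is $B$-adjacent to both $v_1$ and $v_4$. Exploiting the $C_4$-freeness of $B$ and $R$, invoking Theorem~\ref{1} to reduce to the case where $B$ and $R$ have no weak clique cutset, and analyzing the induced subgraphs on $\{v_1,\ldots,v_5,w\}$ in both $B$ and $R$, each possible configuration for $w$ either creates a forbidden induced $C_4$, produces a weak clique cutset (contradicting the reduction), or exhibits an induced $C_5$ in $B$ on the same five-vertex set that already supports an induced $C_5$ in $R$, that is, a double $C_5$ in $G(B,R)$. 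This last alternative is exactly the point where \cite{cliques} used the $C_5$-freeness of $B$, and the no-double-$C_5$ hypothesis is precisely strong enough to close the argument. It follows that $Y_0 \cup \{v_5\}$ is a $B$-clique, and therefore $V = X_0 \cup (Y_0 \cup \{v_5\})$ witnesses the obedience of $V$.
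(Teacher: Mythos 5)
Your first paragraph is correct and matches the paper: the diagonals are in $E(B\setminus R)$, $v_2v_3\notin E(B)$ (else $v_1-v_3-v_2-v_4-v_1$ is an induced $C_4$ in $B$), and the no-double-$C_5$ hypothesis together with $C_4$-freeness forces $v_4v_5,v_5v_1\in E(B)$, so $\{v_1,v_4,v_5\}$ is a blue clique and $\{v_2,v_3\}$ a red one. The forcing of $X_0\cap\{v_1,\dots,v_4\}=\{v_2,v_3\}$ and $\{v_1,v_4\}\subseteq Y_0$ in any obedient decomposition of $V\setminus\{v_5\}$ is also fine. The gap is in the final step. Your claim that every configuration of a vertex $w\in Y_0$ with $wv_5\in E(R)\setminus E(B)$ yields an induced $C_4$, a weak clique cutset, or a double $C_5$ by ``analyzing the induced subgraphs on $\{v_1,\dots,v_5,w\}$'' is not established, and in fact the local trichotomy fails. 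Take $wv_1,wv_4\in E(B)\cap E(R)$ and $wv_2,wv_3,wv_5\in E(R)\setminus E(B)$. One checks directly that neither $B$ nor $R$ restricted to $\{v_1,\dots,v_5,w\}$ contains an induced $C_4$ (in $R$ the vertex $w$ dominates the $C_5$; in $B$ any $C_4$ through $w$ would have to use $v_1,v_4$ and is killed by the chord $v_1v_4$), and no five-vertex subset carries a double $C_5$. On the six-vertex instance $V=\{v_1,\dots,v_5,w\}$ the decomposition $X_0=\{v_2,v_3\}$, $Y_0=\{v_1,v_4,w\}$ of $V\setminus\{v_5\}$ is obedient, yet $Y_0\cup\{v_5\}$ is not a $B$-clique and $X_0\cup\{v_5\}$ is not an $R$-clique; $V$ is still obedient, but only via the different decomposition $\{w,v_2,v_3\}\cup\{v_1,v_4,v_5\}$, i.e., $w$ must be moved to the red side. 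So extending a decomposition of $V\setminus\{v_5\}$ by inserting $v_5$ cannot work vertex by vertex, and no analysis confined to six vertices can produce the weak clique cutset you appeal to, since a cutset is a global object.

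What is missing is the global construction that the paper uses: define $A$ to be the set of vertices of $V\setminus\{v_1,\dots,v_5\}$ that are not $R$-complete to $\{v_2,v_3\}$, prove (via two short $C_4$/double-$C_5$ case checks) that every vertex outside $\{v_1,\dots,v_5\}$ is either $R$-complete to $\{v_2,v_3\}$ or $B$-complete to $\{v_1,v_4,v_5\}$, and that $A$ is a $B$-clique, so that $A\cup\{v_1,v_4,v_5\}$ is a $B$-clique. If the complement of this clique is exactly $\{v_2,v_3\}$ you are done; otherwise every remaining vertex is joined to $\{v_2,v_3\}$ by $R$-edges, so $A\cup\{v_1,v_4,v_5\}$ is a weak clique cutset in $B$ separating $\{v_2,v_3\}$ from the rest in $B\setminus R$, and Theorem~\ref{1} finishes the proof. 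Your sketch never identifies this set $A$ or any candidate cutset, so as written the argument does not go through.
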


\begin{proof}
Since $v_1-v_2-v_3-v_4-v_5-v_1$ is an induced $C_5$, it follows that every edge in the cycle $v_1-v_3-v_5-v_2-v_4-v_1$ belongs to $E(B\setminus R)$. If $v_2v_3$ is a blue edge, then $v_1-v_4-v_2-v_3-v_1$ is an induced $C_4$ in $B$ and this contradict the assumption of the theorem. We obtain that $v_2v_3\in E(R\setminus B)$. Since $G(B,R)$ does not contains a double $C_5$, it follows that at least one of the edges $v_1v_5$, $v_5v_4$ belongs to $E(B)$. We may assume that $v_1v_5\in E(B)$. Consider the cycle $v_1-v_4-v_2-v_5-v_1$. If $v_4v_5$ is not blue, then this cycle is an induced $C_4$ in $B$, a contradiction. It follows that $v_4v_5$ is a blue edge. We conclude that $\{v_1,v_2,v_3,v_4,v_5\}$ is an obedient set, where $\{v_1,v_4,v_5\}$ is the blue clique and $\{v_2,v_3\}$ is the red clique. If $V=\{v_1,v_2,v_3,v_4,v_5\}$, then we are done. Let $u\in V\setminus\{v_1,v_2,v_3,v_4,v_5\}$.\\\\
\textbf{\emph{Claim:}} $u$ is $R$-complete to $\{v_2,v_3\}$ or $B$-complete to $\{v_1,v_4,v_5\}$.\\\\
\emph{Proof of the claim:}
If $u$ is $R$ complete to $\{v_2,v_3\}$, then we are done. Assume that $u$ is not $R$-complete to $\{v_2,v_3\}$ and whithout loss of generality, assume that $uv_2\in E(B\setminus R)$. We show that in this condition $u$ is $B$-complete to $\{v_1,v_4,v_5\}$. Assume in contrary that $uv_1\in E(R\setminus B)$. Since $R$ is $C_4$-free, it follows that $uv_3\in E(B\setminus R)$. For otherwise we have that $u-v_3-v_2-v_1-u$ is an induced $C_4$ in $R$. Since $B$ is $C_4$-free, it follows that $uv_4\in E(R\setminus B)$. For otherwise we have that $u-v_4-v_1-v_3-u$ is an induced $C_4$ in $B$. It follows that $v_4-v_2-u-v_3-v_1$ is a double $C_5$ in $G(B,R)$, a contradiction. So $uv_1$ is a blue edge.\\
Since $B$ is $C_4$-free, it follows that $uv_4\in E(B)$. For otherwise we obtain that $uv_4\in E(R\setminus B)$ and we have that $u-v_2-v_4-v_1-u$ is an induced $C_4$ in $B$.\\
Since $B$ is $C_4$-free, it follows that $uv_5\in E(B)$. For otherwise we have that $u-v_2-v_5-v_1-u$ is an induced $C_4$ in $B$. We conclude that $u$ is $B$-complete to $\{v_1,v_4,v_5\}$. Thus, we proved the claim.

Let $A$ be the set of vertices in $V\setminus\{v_1,\dots,v_5\}$ that are not $R$-complete to $\{v_2,v_3\}$. Then $A$ is $B$-complete to $\{v_1,v_4,v_5\}$.\\\\
\textbf{\emph{Claim:}} $A$ is an $B$-clique.\\\\
\emph{Proof of the claim:}
Let $u,v\in A$ and assume that $uv\in E(R\setminus B)$. Since $u$ is not $R$-complete to $\{v_2,v_3\}$, we may assume that $uv_2\in E(B\setminus R)$. We show that $vv_2\in E(R)$. If $vv_2\notin E(R)$, then $v-v_2-u-v_1-v$ is an induced $C_4$ in $B$, a contradiction. We conclude that $vv_2\in E(R)$ and so $vv_3\notin E(R)$ because $v\in A$. Similarly we obtain that $uv_3\in E(R)$. But now $u-v-v_2-v_3-u$ is a $C_4$ in $R$, a contradiction. This proves the claim that $A$ is a $B$-clique.

By the definition of $A$ and the pervious claim we conclude that that $A\cup \{v_1,v_4,v_5\}$ is a $B$-clique.
Let $Z=V\setminus (A\cup \{v_1,v_4,v_5\})$. If $Z=\{v_2,v_3\}$ then $V$ is obedient. Assume that $Z\setminus \{v_2,v_3\}\neq \emptyset$. Note that every vertex in $Z\setminus \{v_2,v_3\}$ is $R$-complete to $\{v_2,v_3\}$ and so $Z\setminus \{v_2,v_3\}$ is anticomplete to $\{v_2,v_3\}$ in the graph $B\setminus R$. It follows that $A\cup \{v_1,v_4,v_5\}$ is a weak clique cutset in $B$. By Theorem \ref{1}, $V$ is obedient.
\end{proof}

\section{Structures without double $C_5$}
In this section, we give another proof of the recent result by Gy{\'a}rf{\'a}s and Lehel, which we found independently.
\begin{thm}\label{3}
Let $B$ and $R$ be $C_4$-free graphs with vertex set $V$, and assume that $G(B,R)$ is the complete graph. If $G(B,R)$ does not contains a double $C_5$, then $V$ is obedient.
\end{thm}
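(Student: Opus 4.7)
The natural approach is induction on $|V|$, which makes the last hypothesis of Theorem~\ref{12} automatic: assuming the result for all strictly smaller vertex sets and applying the induction hypothesis to the induced graphs $B|U,R|U$ (which inherit the hypotheses of Theorem~\ref{3}), every proper $U\subsetneq V$ is obedient. The base case $|V|\leq 1$ is trivial, so I only need to show that $V$ itself is obedient.

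Two preliminary reductions handle the easy cases. If either $B$ or $R$ is $C_5$-free, then Theorem~\ref{10}, applied after possibly swapping the roles of $B$ and $R$, gives that $V$ is obedient; so I may assume both $B$ and $R$ contain induced $C_5$'s. If either $B$ or $R$ admits a weak clique cutset, Theorem~\ref{1} closes the case; so I may also assume that no such cutset exists in either graph.

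The core step is to produce an induced $C_5$ in $R$ meeting the hypothesis of Theorem~\ref{12}. I would pick any induced $C_5$, say $v_1\,v_2\,v_3\,v_4\,v_5\,v_1$, in $R$. Since $G(B,R)$ is complete and the cycle is induced in $R$, each diagonal $v_iv_{i+2}$ (indices mod $5$) is an edge of $B$; and since $G(B,R)$ contains no double $C_5$, at least one cycle edge $v_iv_{i+1}$ is also in $E(B)$. If two non-adjacent cycle edges lie in $E(R)\setminus E(B)$, Theorem~\ref{12} applies directly and we are done. The residual configurations are those in which the set of red-only cycle edges has size at most one, or has size two with the two edges sharing a vertex; in each such configuration at least three cycle edges are blue, so $B|\{v_1,\dots,v_5\}$ is obtained from $K_5$ by removing at most a path of length two, and the blue triangle $\{v_1,v_3,v_5\}$ (coming from the diagonals) is automatically present.

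The main obstacle is exactly this residual case. My plan is to use the rich blue structure on $\{v_1,\dots,v_5\}$ to exhibit a weak clique cutset in $B$ and then invoke Theorem~\ref{1}: starting from $\{v_1,v_3,v_5\}$, enlarged by whichever of $v_2,v_4$ is forced into it by the blue cycle edges, I would append the vertices of $V\setminus\{v_1,\dots,v_5\}$ that are blue-complete to this set, and then show using the $C_4$-freeness of $B$ and $R$ (together with the no-double-$C_5$ hypothesis) that the resulting blue clique separates $\{v_2,v_3\}$, or an analogous small set, from the rest of $V$ in $B\setminus R$. The type of vertex-by-vertex case analysis on how an external $u$ attaches to $\{v_1,\dots,v_5\}$ in each colour is essentially the same as the one carried out in the claims inside the proof of Theorem~\ref{12}; whenever this fails on the $R$-side, I would invoke the colour-symmetric version of the same argument on an induced $C_5$ in $B$ as a fallback. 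I expect this final case analysis, and in particular the construction of the cutset, to be where the bulk of the technical work lies.
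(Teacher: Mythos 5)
Your proposal diverges from the paper's proof in its key step, and the divergence is exactly where it breaks down. The paper does not start from an arbitrary induced $C_5$ in $R$: it runs the Gy\'arf\'as--Lehel induction (remove a vertex $p$, write $V\setminus\{p\}=X\cup Y$ with $|X_b|+|Y_r|$ minimum, where $X_b,Y_r$ are the vertices joined to $p$ by blue-only, respectively red-only, edges), builds an alternating blue-only/red-only cycle between $X_b$ and $Y_r$, and shows that a shortest such cycle either yields a double $C_5$ (contradiction) or yields an induced $C_5$ in $R$ of the form $x_2-x_3-p-x_1-x_4-x_2$ with the two \emph{non-adjacent} edges $x_2x_3$ and $px_1$ in $E(R\setminus B)$ --- i.e.\ it \emph{constructs} a pentagon to which Theorem \ref{12} applies. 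Your plan instead picks any induced $C_5$ in $R$ and must therefore confront the ``residual case'' (at most one red-only cycle edge, or two adjacent ones), which the paper's construction never produces. That case is where you say ``the bulk of the technical work lies,'' and you give no actual argument for it: the proposed weak-clique-cutset is not constructed, the separation in $B\setminus R$ is not verified, and the fallback (``invoke the colour-symmetric version on a $C_5$ in $B$'') could in principle loop without terminating. As written, this is a genuine gap, not a completable sketch.

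There is also a concrete error in your setup of the residual case: the diagonals of $v_1-v_2-v_3-v_4-v_5-v_1$ are $v_1v_3,v_3v_5,v_5v_2,v_2v_4,v_4v_1$, which form a pentagram (another $C_5$), not any triangle. In particular $v_1v_5$ is a \emph{cycle} edge, so $\{v_1,v_3,v_5\}$ is a blue triangle only if that cycle edge happens to lie in $B$, which the residual hypothesis does not guarantee (e.g.\ the two adjacent red-only edges could be $v_5v_1$ and $v_1v_2$). The preliminary reductions you do carry out (invoking Theorem \ref{10} when one colour is $C_5$-free, and Theorem \ref{1} when a weak clique cutset exists) are sound but do not substitute for the missing case; I would recommend replacing the ``arbitrary pentagon'' step with the alternating-cycle construction, which manufactures the two non-adjacent red-only edges for free.
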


\begin{proof}
We use a similar argument to that in \cite{gy} (see the proof of Theorem 3). We prove the Theorem by induction on $|V|$. The theorem hold for $|V|=1$ or $|V|=2$. Assume that it is true for $|V|=1,2,\dots,n$ and let $|V|=n+1$. Let $p\in V$. By the induction hypothesis $V\setminus \{p\}$ is obedient, namely there exist $X,Y\subseteq V$ such that $V\setminus \{p\}=X\cup Y$, $X$ is a red clique and $Y$ is a blue clique. We define the following sets \begin{center}
$X_b=\{a\in X~|~ap\in E(B\setminus R)\}$ and $Y_r=\{a\in Y~|~ap\in E(R\setminus B)\}$.
\end{center}
We may assume that $X$ and $Y$ is chosen with $|X_b|+|Y_r|$ minimum. If $|X_b|=0$ or $|Y_r|=0$, then the Theorem holds. Assume that $|X_b|\neq0$ and $|Y_r|\neq0$.\\ Let $q\in Y_r$. We claim that in the graph $B\setminus R$, $q$ is connected to a vertex in $X_b$. If $q$ is connected in red to all vertices of $X$, then $X\cup \{q\}$ is a red clique and $V\setminus \{p\}=(X\cup\{q\})\cup Y\setminus \{q\}$ with $|(X\cup\{q\})_b|+|(Y\setminus \{q\})_r|< |X_b|+|Y_r|$, a contradiction to the minimality of $|X_b|+|Y_r|$. It follows that there is $r\in X$ such that $qr\in E(B\setminus R)$. If $r\in X_b$, then we are done. Suppose that $r\notin X_b$. So $pr$ is a red edge. Let $s\in X_b$. If $qs$ is a red edge then $p-q-s-r-p$ is an $C_4$ induced cycle in $R$, a contradiction. So $qs\in E(B\setminus R)$ and the claim follows. Similarly, for every $u\in X_b$ there exists $v\in Y_r$ such that $uv\in E(R\setminus B)$.\\
Let $x_1\in Y_r$. There is $x_2\in X_b$ such that $x_1x_2\in E(B\setminus R)$. Also there is $x_3\in Y_r$ such that $x_2x_3\in E(R\setminus B)$. We continue in this way and obtain an even cycle $x_1-x_2-x_3-x_4-\cdots$ such that $x_{2j-1}\in Y_r$, $x_{2j}\in X_b$, $x_{2j-1}x_{2j}\in E(B\setminus R)$ and $x_{2j}x_{2j+1}\in E(R\setminus B)$ for all $j\geq1$. Let $x_1-x_2-x_3-x_4-\cdots-x_k-x_1$ be the shortest even cycle that we can get in this way.\\\\
\textbf{\emph{Case 1:}} k=4.\\\\
Since $B$ is $C_4$-free, it follows that $x_2x_4\in E(R\setminus B)$. For otherwise we have that $x_1-x_2-x_4-x_3-x_1$ is an induced $C_4$ in $B$. Since $R$ is $C_4$-free, it follows that $x_1x_3\in E(B\setminus R)$. For otherwise we have that $x_1-x_4-x_2-x_3-x_1$ is an induced $C_4$ in $R$. It follows that $p-x_3-x_2-x_4-x_1-p$ is a double $C_5$ in $G(B,R)$, a contradiction.\\\\
\textbf{\emph{Case 2:}} $k>4$.\\\\
By the minimality of $k$ we obtain that $x_1x_4$ is an edge in $B$ and $R$. Since $R$ is $C_4$-free, it follows that $x_1x_3\in E(B\setminus R)$. For otherwise we have that $x_1-x_4-x_2-x_3-x_1$ is an induced $C_4$ in $R$. It follows that $x_2-x_3-p-x_1-x_4-x_2$ is an induced $C_5$ in $R$ with $x_2x_3,px_1\in E(R\setminus B)$. By Theorem \ref{2}, $V$ is obedient.
\end{proof}

\begin{rem}
Another way to prove Theorem \ref{3} is to follow the proof of Theorem 1.7 in \cite{cliques} and use Theorem \ref{2} in every place the authors used Lemma 3.4 of \cite{cliques}.
\end{rem}

\section{Structures with double $C_5$}
In this section, we prove that if $B$ and $R$ are two $C_4$-free graphs on the same vertex set $V$ and $G(B,R)$ is the complete graph, then there exists an $B$-clique $X$, an $R$-clique $Y$ and a clique $Z$ in $B$ and $R$, such that $V=X\cup Y\cup Z$. Further, if $x\in Z$ then $x$ is one of the vertices of some double $C_5$ in $G(B,R)$. This generalize the recent result by Gy{\'a}rf{\'a}s and Lehel (Theorem \ref{3}). We begin with the following lemmas.

\begin{lem}\label{4}
Let $B$ and $R$ be two $C_4$-free graphs and $C:~v_1-v_2-v_3-v_4-v_5-v_1$ be a double $C_5$ in $G(B,R)$ with blue edges and red diagonals. Assume that $G(B,R)$ is the complete graph and $x\in V\setminus \{v_1,\dots,v_5\}$. Then one of the following holds:
\begin{enumerate}
  \item $x$ is connected in $B$ to all the vertices of $C$.
  \item $x$ is connected in $R$ to all the vertices of $C$.
  \item there exists $i$ such that\begin{itemize}
                                    \item $xv_i$ is an edge in $B$ and $R$.
                                    \item In $B\setminus R$: $x$ is connected to the two neighbors of $v_i$ in $C$.
                                    \item In $R\setminus B$: $x$ is connected to the two neighbors of $v_i$ in $C$.\\In this case we say that $xv_i$ is the \textbf{shared edge }of $x$.
                                  \end{itemize}
\end{enumerate}
\end{lem}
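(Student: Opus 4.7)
The plan is to perform a case analysis on the pattern of $B$- and $R$-adjacencies from $x$ to the five vertices of $C$. Since $G(B,R)$ is complete, each edge $xv_j$ lies in $E(B)$, in $E(R)$, or in both; set $B(x)=\{j:xv_j\in E(B)\}$ and $R(x)=\{j:xv_j\in E(R)\}$, so $B(x)\cup R(x)=\{1,2,3,4,5\}$. Conclusions (1), (2), and (3) correspond, respectively, to $B(x)$ being all of $C$, to $R(x)$ being all of $C$, and to $B(x)=\{i-1,i,i+1\}$ together with $R(x)=\{i-2,i,i+2\}$ and $i\in B(x)\cap R(x)$ for some index $i$ (all indices mod $5$).

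The first step is to derive a closure property of $B(x)$ from $B$ being $C_4$-free. If $v_a$ and $v_{a+2}$ lie in $B(x)$ but $v_{a+1}$ does not, then $x$-$v_a$-$v_{a+1}$-$v_{a+2}$-$x$ is an induced $C_4$ in $B$, because $v_av_{a+2}$ is a red diagonal of the double $C_5$ and hence not blue. Thus $a,a+2\in B(x)$ forces $a+1\in B(x)$, and a brief enumeration on $\mathbb{Z}/5$ shows the admissible $B(x)$ are $\emptyset$, a single vertex, a blue-consecutive pair $\{v_a,v_{a+1}\}$, a blue-consecutive triple $\{v_a,v_{a+1},v_{a+2}\}$, or all of $C$; in particular $|B(x)|\neq 4$. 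By the symmetric argument applied to $R$ and its diagonal $C_5$ (in which the unique common red-neighbor of the non-red-adjacent pair $v_a,v_{a+1}$ is $v_{a+3}$), $R(x)$ must be $\emptyset$, a single vertex, a red-adjacent pair $\{v_a,v_{a+2}\}$, a red-consecutive triple, or all of $C$, and $|R(x)|\neq 4$.

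Combining these shapes with $|B(x)\cup R(x)|=5$ leaves only a handful of configurations. If $|B(x)|=5$ we are in case (1), and if $|R(x)|=5$ in case (2). Otherwise $|B(x)|,|R(x)|\le 3$ and $|B(x)|+|R(x)|\ge 5$, so either $|B(x)|=|R(x)|=3$ with $|B(x)\cap R(x)|=1$, or one of them has size $3$ and the other size $2$ and they are disjoint. The disjoint option is ruled out: writing $B(x)=\{a,a+1,a+2\}$ forces $R(x)=\{a+3,a+4\}$, but then the original-consecutive pair $v_{a+3},v_{a+4}$ lies in $R(x)$ while their unique common red-neighbor $v_{a+1}$ does not, producing an induced $C_4$ in $R$; the mirror case ($|R(x)|=3$, $|B(x)|=2$ disjoint) is excluded by the same $C_4$-free argument applied to $B$.

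It remains to handle $|B(x)|=|R(x)|=3$ with a single common vertex. Writing $B(x)=\{a,a+1,a+2\}$, we have $R(x)=\{e,a+3,a+4\}$ for some $e\in\{a,a+1,a+2\}$. Running each value of $e$ through the red closure rule (equivalently, checking which triples of red-cycle positions are consecutive) shows that only $e=a+1$ survives: for $e=a$ and $e=a+2$ an original-consecutive pair stays in $R(x)$ with its forced common red-neighbor absent. Setting $i=a+1$ yields exactly conclusion (3): the two blue-cycle neighbors $v_a,v_{a+2}$ of $v_i$ lie in $B(x)\setminus R(x)$, the remaining two vertices $v_{a+3},v_{a+4}$ lie in $R(x)\setminus B(x)$, and $xv_i\in E(B)\cap E(R)$. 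The main obstacle is juggling the two $C_5$ structures at once -- the blue cycle and the red diagonal cycle -- and keeping track of which pairs of $v_j$'s are consecutive in each; once this bookkeeping is fixed, both the $C_4$ exclusions and the final case analysis become mechanical.
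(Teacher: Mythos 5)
Your proof is correct and follows essentially the same route as the paper's: both arguments classify the possible blue/red adjacency patterns of $x$ on the five cycle vertices and eliminate the bad ones via explicit induced $C_4$'s in $B$ or $R$. Your packaging of the exclusions as a single ``closure rule'' for each color (applied to the blue cycle and to the red diagonal cycle) is a tidy reorganization of the paper's case split on $\deg_B(x)$, but the underlying configurations are the same.
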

\begin{proof}
Let $T=\{v_1,\dots,v_5,x\}$. If $\deg_{B|T}(x)=0$, then condition (2) holds. Assume that $\deg_{B|T}(x)=1$. Without loss of generality, assume that $xv_1\in E(B)$. Since $G(B,R)$ is the complete graph, it follows that $xv_2,\dots,xv_5\in E(R)$. Since $R$ is $C_4$-free, it follows that $xv_1\in E(R)$. For otherwise we have that $x-v_3-v_1-v_4-x$ is an induced $C_4$ in $R$. So $\deg_{R|T}(x)=5$ and condition (2) holds. Assume that $\deg_{B|T}(x)=2$. If $xv_i,xv_j\in E(B)$, where $v_iv_j\in E(R\setminus B)$, then $x-v_i-v_m-v_j-x$ is an induced $C_4$ in $B$, where $v_m$ is the common neighbor of $v_i$ and $v_j$ in the cycle $v_1-v_2-v_3-v_4-v_5-v_1$, a contradiction. It follows that $x$ is connected to two successive vertices of the cycle $v_1-v_2-v_3-v_4-v_5-v_1$. Without loss of generality, assume that $xv_1,xv_2\in E(B)$. It follows that $xv_3,xv_4,xv_5\in E(R)$. Since $R$ is $C_4$-free, it follows that $xv_1\in E(R)$. For otherwise we have that $x-v_3-v_1-v_4-x$ is an induced $C_4$ in $R$. Since $R$ is $C_4$-free, it follows that $xv_2\in E(R)$. For otherwise we have that $x-v_4-v_2-v_5-x$ is an induced $C_4$ in $R$. So $\deg_{R|T}(x)=5$ and condition (2) holds. Assume that $\deg_{B|T}(x)=3$. A similar argument shows that $x$ is connected to three successive vertices of the cycle.  Without loss of generality, assume that $xv_1,xv_2,xv_3\in E(B)$. It follows that $xv_4,xv_5\in E(R)$. Since $R$ is $C_4$-free, it follows that $xv_2\in E(R)$. For otherwise we have that $x-v_4-v_2-v_5-x$ is an induced $C_4$ in $R$. If $xv_1\notin E(R)$ and $xv_3\notin E(R)$, then condition (3) holds. If $xv_1\in E(R)$, then $xv_3\in E(R)$. For otherwise, we have that $x-v_1-v_3-v_5-x$ is an induced $C_4$ in $R$. So condition (2) holds. Similarly, if $xv_3\in E(R)$, then condition (2) holds. Assume that $\deg_{B|T}(x)=4$. Without loss of generality, assume that $xv_1,xv_2,xv_3,xv_4\in E(B)$. Since $B$ is $C_4$-free, it follows that $xv_5\in E(B)$. For otherwise we have that $x-v_1-v_5-v_4-x$ is an induced $C_4$ in $B$, a contradiction. So $\deg_{B|T}(x)=5$ and condition (1) holds.
\end{proof}

\begin{lem}\label{5}
Let $H$ be a $C_4$-free graph and $v_1-v_2-v_3-v_4-v_5-v_1$ be an induced $C_5$ cycle in $H$. If $x\neq y\notin\{v_1,\dots,v_5\}$ are two vertices connected to the same three successive vertices of the cycle, then $xy\in E(H)$.
\end{lem}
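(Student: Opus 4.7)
The plan is to argue by contradiction: assume $xy \notin E(H)$ and exhibit an induced $4$-cycle in $H$. By the cyclic symmetry of the $C_5$, I may relabel so that the three successive vertices in the common neighborhood of $x$ and $y$ are $v_1, v_2, v_3$. The key observation is that $v_1$ and $v_3$ are the two \emph{endpoints} of this successive triple, and because the cycle $v_1 - v_2 - v_3 - v_4 - v_5 - v_1$ is induced in $H$, the chord $v_1v_3$ is not an edge of $H$. This is the only place I need the $C_5$ to be induced.

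Next I would consider the four vertices $x, v_1, y, v_3$. They are pairwise distinct: $x \neq y$ by hypothesis, $v_1 \neq v_3$, and $x, y \notin \{v_1, \ldots, v_5\}$, so neither $x$ nor $y$ coincides with $v_1$ or $v_3$. The four edges $xv_1$, $v_1y$, $yv_3$, $v_3x$ all belong to $E(H)$ since both $x$ and $y$ are adjacent to $v_1$ and to $v_3$. Combined with the two non-edges $xy$ (by the contradiction hypothesis) and $v_1v_3$ (by inducedness of the $C_5$), the cycle $x - v_1 - y - v_3 - x$ is then an induced $C_4$ in $H$, contradicting the $C_4$-freeness of $H$. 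Hence $xy \in E(H)$.

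There is essentially no obstacle here; in fact, the middle vertex $v_2$ of the successive triple and the opposite arc $v_4, v_5$ of the $C_5$ play no role in the argument. The lemma amounts to the general fact that in any $C_4$-free graph, two distinct common neighbors of a non-adjacent pair must themselves be adjacent, applied to the non-adjacent pair $\{v_1, v_3\}$ supplied by the induced $C_5$.
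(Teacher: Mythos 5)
Your proof is correct and is essentially identical to the paper's: both assume $xy\notin E(H)$ and exhibit the induced $C_4$ on $\{x,v_1,y,v_3\}$ (the paper writes it as $v_1-y-v_3-x-v_1$), using the inducedness of the $C_5$ only to guarantee $v_1v_3\notin E(H)$. No issues.
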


\begin{proof}
Without loss of generality, assume that $\{x,y\}$ is $H$-complete to $\{v_1,v_2,v_3\}$. If $xy\notin E(H)$, then $v_1-y-v_3-x-v_1$ is an induced $C_4$ in $H$, a contradiction. It follows that $xy\in E(H)$.
\end{proof}
Now, we prove the main result.
\begin{thm}\label{7}
Let $B$ and $R$ be two $C_4$-free graphs on the same vertex set $V$ and assume that $G(B,R)$ is the complete graph. Then there exists an $B$-clique $X$, an $R$-clique $Y$ and a clique $Z$ in $B$ and $R$, such that $V=X\cup Y\cup Z$. Further, if $x\in Z$ then $x$ is one of the vertices of some double $C_5$ in $G(B,R)$.
\end{thm}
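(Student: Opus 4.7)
The plan is to split by whether $G(B,R)$ contains a double $C_5$. If it does not, Theorem \ref{3} yields $V=X\cup Y$ with $X$ an $R$-clique and $Y$ a $B$-clique, and we set $Z=\emptyset$. Otherwise, fix a double $C_5$ $C:v_1-v_2-v_3-v_4-v_5-v_1$; the cycle edges $v_iv_{i+1}$ must lie in $B\setminus R$ and the diagonals $v_iv_{i+2}$ in $R\setminus B$, since if any cycle edge were also in $R$ then $R|C$ would be an induced $C_5$ with a chord, a contradiction.

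By Lemma \ref{4}, every $x\in V\setminus C$ lies in $A_B$ ($B$-complete to $C$), $A_R$ ($R$-complete to $C$), or $A_i$ (shared edge $xv_i$) for some $i\in\{1,\dots,5\}$; the only possible overlap, between $A_B$ and $A_R$, is resolved by an arbitrary convention. I then propose the partition
\[
X=A_B\cup A_1\cup A_2\cup\{v_1,v_2\},\quad Y=A_R\cup A_3\cup A_5\cup\{v_3,v_5\},\quad Z=A_4\cup\{v_4\}.
\]
The motivation is that $\{v_1,v_2\}\cup\{v_3,v_5\}\cup\{v_4\}$ is, up to rotation, the unique way to split $C$ as (a $B$-edge)$\cup$(an $R$-edge)$\cup$(a singleton); the classes $A_1,A_2$ (resp.\ $A_3,A_5$) are precisely the type-(3) classes $B$-complete to $\{v_1,v_2\}$ (resp.\ $R$-complete to $\{v_3,v_5\}$), while $A_4$ is the unique $A_i$ keeping $x$ both $B$- and $R$-adjacent to $v_4$.

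Lemma \ref{5} applied to the $B$-cycle on $C$ makes each of $A_B,A_1,A_2$ a $B$-clique (each is $B$-complete to three consecutive vertices), and applied to the $R$-cycle $v_1-v_3-v_5-v_2-v_4-v_1$ it makes $A_R,A_3,A_5$ $R$-cliques; $A_4$ satisfies both hypotheses simultaneously, so it is a clique in both graphs. Cross-class edges are verified by showing that the wrong colour forces an induced $C_4$ in a $C_4$-free graph: for $x\in A_1$, $y\in A_2$ with $xy\in R\setminus B$ the $C_4$ is $x-y-v_5-v_3-x$ in $R$; for $x\in A_B$, $y\in A_1$ with $xy\in R\setminus B$ the $C_4$ is $x-v_2-y-v_5-x$ in $B$; and the least symmetric case, $y\in A_3$, $z\in A_5$ with $yz\in B\setminus R$, yields the $C_4$ $y-v_2-v_1-z-y$ in $B$ on $\{y,z,v_1,v_2\}$. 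The remaining cross-pairs $A_B$ versus $A_2$, $A_R$ versus $A_3$, $A_R$ versus $A_5$ follow analogous four-vertex patterns. The double-$C_5$ condition on $Z$ is immediate: $v_4\in C$, and for any $x\in A_4$ the set $\{x,v_1,v_2,v_3,v_5\}$ is a double $C_5$ with blue cycle $v_1-v_2-v_3-x-v_5-v_1$ and red cycle $x-v_1-v_3-v_5-v_2-x$, both induced directly from the shared-edge description of $A_4$.

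I expect the main obstacle to be the cross-class case analysis. It is tempting to try to derive a contradiction through a five-vertex double $C_5$ of the form $\{x,y,v_i,v_j,v_k\}$, but the actual obstructions are four-vertex induced $C_4$'s and the correct four vertices differ from pair to pair; in particular the $A_3$ versus $A_5$ case, where the $C_4$ sits in $B$ on $\{y,z,v_1,v_2\}$, is the one most easily overlooked.
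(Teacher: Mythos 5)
Your proposal is correct and follows essentially the same route as the paper: the same reduction via Theorem \ref{3} when there is no double $C_5$, the same classification of outside vertices into $M$, $N$, $A_1,\dots,A_5$ via Lemma \ref{4}, the same assignment $X=M\cup A_1\cup A_2\cup\{v_1,v_2\}$, $Y=N\cup A_3\cup A_5\cup\{v_3,v_5\}$, $Z=A_4\cup\{v_4\}$, and the same induced-$C_4$ contradictions (including the $A_1$ vs.\ $A_2$ cycle $x-y-v_5-v_3-x$ in $R$ and the $A_3$ vs.\ $A_5$ cycle $y-v_2-v_1-z-y$ in $B$). The only cosmetic difference is that you verify $M$ vs.\ $A_1$ with an explicit four-cycle where the paper simply cites Lemma \ref{5}; both are valid.
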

\begin{proof}
If $G(B,R)$ does not contains a double $C_5$, then by Theorem \ref{3} there exists an $B$-clique $X$, an $R$-clique $Y$ such that $V=X\cup Y$. By choosing $Z=\emptyset$, we are done. So assume that $G(B,R)$ contains a double $C_5$: $v_1-v_2-v_3-v_4-v_5-v_1$ with blue edges and red diagonals. If $V=\{v_1,\dots,v_5\}$, then by taking $X=\{v_1,v_2\}$, $Y=\{v_3,v_5\}$ and $Z=\{v_4\}$, we finish the proof. So assume that $V\setminus \{v_1,\dots,v_5\}\neq \emptyset$. We define the following sets:\\
\begin{center}
$M=\{p~|p\notin \{v_1,\dots,v_5\}~\mathrm{and}~pv_i\in E(B)~\mathrm{for~all}~ 1\leq i\leq 5\}$,\\
$N=\{p~|p\notin \{v_1,\dots,v_5\}~\mathrm{and}~pv_i\in E(R)~\mathrm{for~all}~ 1\leq i\leq 5\}$,\\
$A_j=\{p~|p\notin \{v_1,\dots,v_5\}~\mathrm{and}~pv_j~\mathrm{is~the~shared~edge~of}p\}$ for all $1\leq j\leq 5$.
\end{center}
By Lemma \ref{4}, we have $V\setminus\{v_1,\dots,v_5\}=M\cup N\cup A_1\cdots\cup A_5$.

If $p_1,p_2\in M$, then $p_1v_i\in E(B)$ and $p_2v_i\in E(B)$ for all $1\leq i\leq 5$. In particular, $p_1$ and $p_2$ are connected in $B$ to the same three successive vertices of the induced cycle $v_1-v_2-v_3-v_4-v_5-v_1$. Since $B$ is $C_4$-free, by Lemma \ref{5} we conclude that $p_1p_2\in E(B)$. It follows that $M$ is an $B$-clique. Similarly, if $p_1,p_2\in N$, then $p_1$ and $p_2$ are connected in $R$ to the same three successive vertices of the induced cycle $v_1-v_3-v_5-v_2-v_4-v_1$. By Lemma \ref{5}, we obtain that $N$ is an $R$-clique.\\
Let $p_1,p_2\in A_1$. Note that $\{p_1,p_2\}$ is $B$-complete to $\{v_1,v_2,v_5\}$ and $v_1,v_2,v_5$ are successive vertices in the induced cycle $v_1-v_2-v_3-v_4-v_5-v_1$ in $B$. By Lemma \ref{5}, we obtain that $p_1p_2\in E(B)$. Note also that $\{p_1,p_2\}$ is $R$-complete to $\{v_1,v_3,v_4\}$ and $v_1,v_3,v_4$ are successive vertices in the induced cycle $v_1-v_3-v_5-v_2-v_4-v_1$ in $R$. By Lemma \ref{5}, we obtain that $p_1p_2\in E(R)$. It follows that $A_1$ is a clique in $B$ and $R$. Similarly, $A_j$ is a clique in $B$ and $R$ for all $2\leq j\leq 5$.\\
A similar argument shows that $M\cup A_j$ is an $B$-clique for all $1\leq j\leq 5$ and $N\cup A_j$ is an $R$-clique for all $1\leq j\leq 5$. \\\\
\textbf{\emph{Claim:}} $M\cup A_1\cup A_2\cup \{v_1,v_2\}$ is a clique in $B$.\\\\
\emph{Proof of the claim:} If $x\in M\cup A_1$, then by the definition of $M$ and $A_1$, we obtain that $xv_1\in E(B)$. If $x\in A_2$, then $xv_2$ is the shared edge of $x$. Since $v_1$ is a neighbor (in $B\setminus R$) of $v_2$ in the induced cycle $v_1-v_2-v_3-v_4-v_5-v_1$, it follows that $xv_1\in E(B)$. So $\{v_1\}$ is $B$-complete to $M\cup A_1\cup A_2$. Similarly, $\{v_2\}$ is $B$-complete to $M\cup A_1\cup A_2$. We finish the proof of the claim if we show that $A_1$ is $B$-complete to $A_2$. Let $x_1\in A_1$ and $x_2\in A_2$. If $x_1x_2\in E(R)$, then $x_1-x_2-v_5-v_3-x_1$ is an induced $C_4$ in $R$, a contradiction. Since $G(B,R)$ is the complete graph it follows that $x_1x_2\in E(B)$. Thus, we proved the claim.\\\\
\textbf{\emph{Claim:}} $N\cup A_3\cup A_5\cup \{v_3,v_5\}$ is a clique in $R$.\\\\
\emph{Proof of the claim:} If $x\in N\cup A_3$, then by the definition of $N$ and $A_3$, we obtain that $xv_3\in E(R)$. If $x\in A_5$, then $xv_5$ is the shared edge of $x$. Since, $v_3$ is a neighbor (in $R\setminus B$) of $v_5$ in the induced cycle $v_1-v_3-v_5-v_2-v_4-v_1$, it follows that $xv_3\in E(R)$. So $\{v_3\}$ is $R$-complete to $N\cup A_3\cup A_5$. Similarly, $\{v_5\}$ is $R$-complete to $N\cup A_3\cup A_5$. We finish the proof of the claim if we show that $A_3$ is $R$-complete to $A_5$. Let $x_3\in A_3$ and $x_5\in A_5$. If $x_3x_5\in E(B)$, then $x_3-v_2-v_1-x_5-x_3$ is an induced $C_4$ in $B$, a contradiction. Since $G(B,R)$ is the complete graph it follows that $x_3x_5\in E(R)$. Thus, we proved the claim.\\\\
\textbf{\emph{Claim:}} $A_4\cup \{v_4\}$ is a clique in $B$ and in $R$.\\\\
\emph{Proof of the claim:} If $x\in A_4$, then $xv_4$ is the shared edge to $x$. So $xv_4$ is an edge in $B$ and $R$. Thus, the claim follows from the definition of $A_4$.\\\\
We set $$X=M\cup A_1\cup A_2\cup \{v_1,v_2\},~~~~~Y=N\cup A_3\cup A_5\cup \{v_3,v_5\},~~~~~Z=A_4\cup \{v_4\}.$$
By the above claims and Lemma \ref{4}, we obtain that $X$ is a clique in $B$, $Y$ is a clique in $R$ and $Z$ is a clique in $B$ and $R$, with $V=X\cup Y\cup Z$. \\

Let $x\in Z$. If $x=v_4$, then $v_1-v_2-v_3-v_4-v_5-v_1$ is a double $C_5$ that contains $v_4$ as a vertex. If $x\neq v_4$, then $x-v_5-v_1-v_2-v_3-x$ is a double $C_5$ in $G(B,R)$. Hence, if $x\in Z$ then $x$ is one of the vertices of some double $C_5$ in $G(B,R)$.
\end{proof}

As a corollary of Theorem \ref{7}, we obtain the following.

\begin{cor}
If $B$ and $R$ are two $C_4$-free graphs on the same vertex set $V$ then $$\omega(G(B,R))\leq \omega(B)+\omega(R)+\omega(H(B,R)).$$
\end{cor}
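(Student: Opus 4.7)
The plan is to apply Theorem \ref{7} directly to a maximum clique of $G(B,R)$. Let $W\subseteq V$ be a clique in $G(B,R)$ with $|W|=\omega(G(B,R))$. First I would restrict attention to the induced subgraphs $B|W$ and $R|W$: these are still $C_4$-free (induced subgraphs of $C_4$-free graphs are $C_4$-free), and since every pair of vertices in $W$ is adjacent in $G(B,R)$, we have that $G(B|W,R|W)$ is the complete graph on $W$.

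Next I would invoke Theorem \ref{7} on the vertex set $W$ with the graphs $B|W$ and $R|W$. This yields a partition $W=X\cup Y\cup Z$ where $X$ is a clique in $B|W$, $Y$ is a clique in $R|W$, and $Z$ is a clique in both $B|W$ and $R|W$. In particular, $X$ is a clique in $B$ (so $|X|\le\omega(B)$), $Y$ is a clique in $R$ (so $|Y|\le\omega(R)$), and every pair of vertices in $Z$ is adjacent in both $B$ and $R$, i.e.\ adjacent in $H(B,R)$, so $Z$ is a clique in $H(B,R)$ and $|Z|\le\omega(H(B,R))$.

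Combining these bounds gives
\[
\omega(G(B,R))=|W|\le|X|+|Y|+|Z|\le\omega(B)+\omega(R)+\omega(H(B,R)),
\]
which is the desired inequality. There is no real obstacle here beyond recognizing that Theorem \ref{7} is exactly the structural decomposition needed and that the hypotheses transfer to the induced subgraphs on $W$; the work is entirely front-loaded into Theorem \ref{7}.
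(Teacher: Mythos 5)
Your proposal is correct and follows exactly the paper's own argument: restrict to a maximum clique $T$ of $G(B,R)$, apply Theorem \ref{7} to $B|T$ and $R|T$, and bound $|X|$, $|Y|$, $|Z|$ by $\omega(B)$, $\omega(R)$, $\omega(H(B,R))$ respectively. The only difference is that you spell out slightly more explicitly why $Z$ is a clique in $H(B,R)$, which the paper leaves implicit.
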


\begin{proof}
Let $T$ be a maximum clique in $G(B,R)$. So $B|T$ and $R|T$ are two $C_4$-free graphs on the same vertex set $T$ such that $G(B|T,R|T)$ is the complete graph. By Theorem \ref{7}, there exists an $B|T$-clique $X$, an $R|T$-clique $Y$ and a clique $Z$ in $B|T$ and $R|T$, such that $T=X\cup Y\cup Z$. So $$\omega(G(B,R))=|T|\leq |X|+|Y|+|Z|\leq \omega(B)+\omega(R)+\omega(H(B,R)).$$
\end{proof}

Also, we have the following additional corollary of Theorem \ref{7}.

\begin{cor}
If $B$ and $R$ are two $C_4$-free graphs on the same vertex set $V$ then $$\omega(G(B,R))\leq \omega(B)+\omega(R)+\frac{1}{2}\min(\omega(B),\omega(R)).$$
\end{cor}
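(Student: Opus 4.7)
The plan is to apply Theorem~\ref{7} to a maximum clique $T$ of $G(B,R)$ and then push the decomposition further by exploiting the cyclic symmetry of the double $C_5$, which will produce five linear constraints in each of the two colors and together squeeze the ``middle'' part of the decomposition down by a factor of two.

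I will assume without loss of generality that $\omega(B)\leq\omega(R)$ and write $b=\omega(B)$, $r=\omega(R)$. Let $T$ be a maximum $G(B,R)$-clique, so that $B|T$ and $R|T$ are $C_4$-free and $G(B|T,R|T)$ is complete. If $G(B|T,R|T)$ contains no double $C_5$, Theorem~\ref{3} makes $T$ obedient and we even get $|T|\leq b+r$. So assume there is a double $C_5$ $v_1$-$v_2$-$v_3$-$v_4$-$v_5$-$v_1$ (blue cycle, red pentagram) and introduce the sets $M,N,A_1,\dots,A_5$ from the proof of Theorem~\ref{7}. Setting $m=|M|$, $n=|N|$, $a_j=|A_j|$, and $A=a_1+\dots+a_5$, we have $|T|=m+n+A+5$.

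The key observation is that the three ``claim'' subarguments inside the proof of Theorem~\ref{7} use only the abstract structure of the double $C_5$ and the definitions of $M,N,A_j$. Cycling the labels $v_i\mapsto v_{i+1}$ through $\mathbb{Z}/5$, the same arguments show that for every $j$ (indices mod $5$) the set $M\cup A_j\cup A_{j+1}\cup\{v_j,v_{j+1}\}$ is a $B$-clique and the set $N\cup A_j\cup A_{j+2}\cup\{v_j,v_{j+2}\}$ is an $R$-clique. This gives the ten inequalities $m+a_j+a_{j+1}+2\leq b$ and $n+a_j+a_{j+2}+2\leq r$.

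Summing the five blue inequalities (each $a_j$ appears twice) yields $5m+2A\leq 5(b-2)$ and similarly $5n+2A\leq 5(r-2)$. Rewriting as $m\leq b-2-\tfrac{2A}{5}$ and $n\leq r-2-\tfrac{2A}{5}$ and substituting into $|T|=m+n+A+5$ gives $|T|\leq b+r+1+\tfrac{A}{5}$. Using $m\geq 0$ in the blue sum forces $A\leq\tfrac{5(b-2)}{2}$, so $1+\tfrac{A}{5}\leq\tfrac{b}{2}$, and hence $|T|\leq b+r+\tfrac{b}{2}=\omega(B)+\omega(R)+\tfrac{1}{2}\min(\omega(B),\omega(R))$, as required.

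The main obstacle is the cyclic-symmetry step: the proof of Theorem~\ref{7} writes out each clique claim only for one specific choice of $j$, and I must verify that each of its three subclaims depends on nothing beyond the abstract cycle-plus-pentagram data together with the behavior of $M,N,A_j$ under that rotation, so that all five rotations go through verbatim. This is routine but is the one place where the blue neighbors of $v_j$ (its successors along the cycle) can easily be confused with its red (pentagram) neighbors; the rest of the argument is a linear-programming style calculation.
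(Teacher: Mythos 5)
Your proposal is correct and follows essentially the same route as the paper: apply Theorem~\ref{7} to a maximum clique $T$, observe that the clique claims in its proof rotate with the double $C_5$ (the paper uses this implicitly when it relabels so that $A_4$ is the smallest of the $A_j$), and deduce that the ``middle'' part costs at most $\tfrac{1}{2}\min(\omega(B),\omega(R))$. The only difference is cosmetic: the paper compares $2|A_4|+2$ directly with $|A_1|+|A_2|+2\leq\omega(B)$ after choosing $A_4$ minimal, whereas you average the five rotated inequalities; both yield the same bound.
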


\begin{proof}
Let $T$ be a maximum clique in $G(B,R)$. So $B|T$ and $R|T$ are two $C_4$-free graphs on the same vertex set $T$ such that $G(B|T,R|T)$ is the complete graph. If $G(B|T,R|T)$ does not contains a double $C_5$, then by Theorem \ref{3}, $T$ is the union of a clique in $B|T$ and a clique in $R|T$. It follows that $\omega(G(B,R))\leq \omega(B)+\omega(R)$ and the theorem holds. Assume that $G(B|T,R|T)$ contains a double $C_5$. Following the proof of Theorem \ref{7}, we may assume that $A_4$ is minimal so that $|A_4|\leq |A_i|$ for all $1\leq i\leq 5$. We obtain that $2|Z|=2|A_4|+2\leq|A_1|+|A_2|+|\{v_1,v_2\}|=|A_1\cup A_2\cup\{v_1,v_2\}|\leq \omega(B|T)\leq \omega(B)$. So $|Z|\leq \frac{1}{2}\omega(B)$. Similarly, $|Z|\leq \frac{1}{2}\omega(R)$. It follows that $$\omega(G(B,R))\leq \omega(B)+\omega(R)+\frac{1}{2}\min(\omega(B),\omega(R)).$$
\end{proof}

\end{document}